\newtheorem{theorem}{Theorem}[section]
\theoremstyle{definition}
\newtheorem{definition}[theorem]{Definition}
\newtheorem{example}{Example}
\title[Simple permutations with order $4n + 2$. Part I]{Simple permutations with order $4n + 2$. Part I}
\subjclass{Primary: 37E15; Secondary: 05A05, 37A99}
 \keywords{Block's Orbits, Combinatorial Dynamics, Markov Graphs, Pasting, Periodic Points, Reversing, Sharkovskii's Theorem, Simple Permutations}
 \email{pacostahumanez@uninorte.edu.co}
 \email{oscare.martinez@usa.edu.co}
\thanks{The first author is supported by the MICIIN/FEDER grant MTM2009-06973 and by Universidad del Norte. The second author is supported by Universidad Sergio Arboleda.}
\begin{document}
\maketitle

\centerline{\scshape Primitivo B. Acosta-Hum\'anez}
\medskip
{\footnotesize
 \centerline{Departamento de Matem\'aticas y Estad\'istica}
 \centerline{Universidad del Norte}
 \centerline{Barranquilla, Colombia}
} 

\medskip

\centerline{\scshape Eduardo Mart\'inez Castiblanco}
\medskip
{\footnotesize
 \centerline{Escuela de Matem\'aticas}
 \centerline{Universidad Sergio Arboleda}
 \centerline{Bogot\'a, Colombia}
} %

\bigskip

\centerline{Dedicated to Jes\'us Hernando P\'erez (Pelusa), teacher and friend.}

	\begin{abstract}
The problem of genealogy of permutations has been solved partially by Stefan (odd order) and Acosta-Hum\'anez \& Bernhardt (power of two). It is well known that Sharkovskii's theorem shows the relationship between the cardinal of the set of periodic points of a continuous map, but simple permutations will show the behavior of those periodic points. This paper studies the structure of permutations of mixed order $4n+2$, its properties and a way to describe its genealogy by using Pasting and Reversing.
	\end{abstract}

	\section{Introduction}
According to the Encyclop\ae dia Brittanica, genealogy is ``the study of family origins and history".  In this paper we are going to show the transposition of this idea to the context of combinatorial dynamics and particulary, how genealogy can be used as a way to understand forcing relationships between periodic points.

		\subsection{Historical Background}
The Combinatorial Dynamics as a field appears in 1964 with the paper ``Co-Existence of Cycles of a Continuous Mapping of a Line onto Itself"  written by Oleksandr Mikolaiovich Sharkovskii (see \cite{Sharkovskii,Sharkovskii2}). From this point and on, the study of algebraic and topological relationships of continuous functions in $\mathbb{R}$ becomes important. In this context, permutations can be used to show minimal orbits.

Subsequently, Louis Block defined a special kind of permutations called simple orbits (also known as Block's Orbits) (see \cite{Block1,Block2}). In this definition Block emphasizes in the three different kinds of orbits related to the three different ``tails" in Sharkovskii's order. The odd order simple permutations are also known as Stefan Orbits (see \cite{Stefan}). There exists two per order and its forcing (genealogy) can be described as two separated lines according to the first permutation.

Chris Bernhardt suggested the study of predecessors and successors of a simple permutation in his paper ``Simple permutations with order a power of two" (see \cite{Bernhardt}) in which he describes a procedure to find the predecessor and the successor of any permutation of order a power of two by using transpositions, rising up a tree with this partial ordering.

Recently the first author gave a new perspective to Bernhardt's results, by including the operations Pasting and Reversing in order to obtain a recursive algorithm that produces the genealogy lines in order a power of two (see \cite{Genealogia}). From this point and on, Pasting and Reversing becomes an important way to study the genealogy problem in the remaining order: Mixed Order.

In this paper we will show a procedure to construct simple permutations by using pasting and reversing. It can be considered as a first step to find the genealogy of mixed order. At this moment, we are interested in the analysis of the algebraic structure of the simple permutations. The dynamical structure will be presented in a further work.

	\subsection{Preliminaries}

The theoretical background used to develop the genealogy's problem is divided in two subsections: the basics of combinatorial dynamics and the Pasting and Reversing operations. In the first part we only state the definitions. In the second part the theorems will be proofed carefully.

	\subsection{Combinatorial Dynamics}
The theoretical background, see \cite{Combinatorial,Minimal,Block1,Ho, Misiurewicz}, starts with Sharkovskii's theorem, then, primitive functions and Markov graphs will be defined.

    \subsubsection{Sharkovskii's Theorem}
The main result of Sharkovskii relates a new order of the natural numbers and the existence of cycles (periodic points). He defined a new order for the set of natural numbers as follows: $n_{1}$ precedes $n_{2}$ ($n_{1} \prec n_{2}$) if for every continuous map of the line into itself the existence of a cycle of order $n_{1}$ implies the existence of a cycle of order $n_{2}$.
		\begin{theorem}[Sharkovskii's Theorem, \cite{Sharkovskii}]
The introduced relationship transforms the set of natural numbers into an ordered set, ordered in the following way:
\begin{equation*}
3 \prec 5 \prec 7 \prec 9 \prec 11 \prec \dots \prec 3\cdot2 \prec 5\cdot2 \prec \dots \prec 3\cdot2^{2} \prec 5\cdot2^{2} \prec \dots \prec 2^{3} \prec 2^{2} \prec 2 \prec 1\end{equation*}
		\end{theorem}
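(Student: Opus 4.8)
The plan is to reduce the whole statement to the combinatorics of a single continuous map restricted to the convex hull of one periodic orbit, everything powered by the intermediate value theorem. First I would record the \emph{fundamental covering lemma}: if $I,J$ are compact intervals and $f(I)\supseteq J$ (write $I\to J$, ``$I$ covers $J$''), then $I$ contains a compact subinterval mapped \emph{onto} $J$; iterating, if $I_0\to I_1\to\cdots\to I_{m-1}\to I_0$ is a loop of compact intervals with nonempty interiors, there is $x\in I_0$ with $f^i(x)\in I_i$ for every $i$ and $f^m(x)=x$. A short supplementary argument — the $I_i$ can be taken with pairwise disjoint interiors unless the loop is a concatenation of copies of a strictly shorter loop — upgrades this to the \emph{loop lemma}: such a loop produces a point of \emph{exact} period $m$ whenever the loop is not a repetition of a shorter one and a mild non-degeneracy condition holds.

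Next I would fix a periodic orbit $P=\{p_1<p_2<\cdots<p_n\}$ of $f$, put $I_j=[p_j,p_{j+1}]$ for $1\le j\le n-1$, and form the directed graph $\mathcal{G}$ whose vertices are the $I_j$ and whose edges are the covering relations $I_j\to I_k$. Two elementary observations: (i) since $p_1<f(p_1)$ and $p_n>f(p_n)$, some $I_j$ satisfies $I_j\to I_j$, so $f$ has a fixed point (period $1$) inside the hull of $P$; (ii) every vertex of $\mathcal{G}$ is covered by some vertex, and $\mathcal{G}$ is connected. From here the argument branches according to the $2$-adic shape of $n$.

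For the heart of the theorem — $n$ \emph{odd}, $n>1$, and \emph{minimal} among the odd periods $>1$ of $f$ — I would prove the \emph{Stefan structure lemma}: after reversing the orientation of the line if necessary, $f$ visits the points of $P$ in the combinatorial pattern of a Stefan cycle, so that $\mathcal{G}$ contains a subgraph consisting of a directed path $I_{a_1}\to I_{a_2}\to\cdots\to I_{a_{n-1}}$ together with the two extra edges $I_{a_{n-1}}\to I_{a_1}$ and $I_{a_{n-1}}\to I_{a_{n-1}}$. Reading off the loops of this subgraph yields, for every $m$ with $n\prec m$ (every odd $m\ge n$, then every even number, then every power of two), a loop of length $m$ that is not a repetition of a shorter one, hence by the loop lemma a point of exact period $m$; exactness uses that the unique length‑$1$ loop of the subgraph sits at $I_{a_{n-1}}$, which forces the itinerary of any genuinely shorter repeated loop to be constant, contradicting that $m$ is not a proper repetition. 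The general odd case then follows, since any odd period $>1$ dominates the minimal one in the Sharkovskii order, and transitivity closes the gap.

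Finally I would dispatch the composite and pure‑power cases by a \emph{squaring reduction}. If $f$ has a point of period $2^k q$ with $q$ odd and $k\ge1$, its orbit decomposes under $g=f^2$ into one or two $g$‑orbits of period $2^{k-1}q$; by induction on $k$ the already‑established forcing for $g$ transfers to $f$, using that a period‑$r$ point of $g$ is a point of $f$ whose $f$‑period is $r$ or $2r$ and checking that the Sharkovskii order is compatible with this ambiguity; applied to a pure power of two this produces period $2^{k-1}$ from period $2^k$, and together with observation (i) it closes the tail $\cdots\prec 8\prec4\prec2\prec1$. I expect the main obstacle to be the Stefan structure lemma: proving that a \emph{minimal} odd orbit must realize precisely the Stefan pattern — in particular that $\mathcal{G}$ contains the self‑loop–plus–long‑cycle configuration — requires a careful induction on the orbit tracking which $I_j$ lie on each side of the fixed point and invoking minimality to exclude shorter odd loops; the second, more technical, difficulty is the bookkeeping that guarantees \emph{exact} rather than merely divisible periods for each derived loop.
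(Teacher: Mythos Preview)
The paper does not prove Sharkovskii's Theorem; it merely quotes it as classical background with a citation to \cite{Sharkovskii}, so there is no in-paper proof to compare your proposal against. The surrounding section explicitly states that for the combinatorial-dynamics preliminaries only definitions (and statements) are given.

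That said, your outline is a faithful sketch of one of the standard modern proofs (essentially the Block--Guckenheimer--Misiurewicz--Young / Stefan route): the covering lemma and loop lemma, the Markov graph on the $n-1$ subintervals determined by a periodic orbit, the Stefan structure lemma for a minimal odd period to produce all successors in the Sharkovskii order, and the squaring reduction $f\mapsto f^2$ to handle the $2^k q$ and pure $2^k$ tails. The points you flag as the delicate ones are exactly the right ones: establishing that a \emph{minimal} odd orbit is forced into the Stefan pattern, and the bookkeeping that converts ``period dividing $m$'' into ``period exactly $m$'' for the loops you read off. If you wanted to turn this into a complete proof you would need to fill in precisely those two steps; the rest is routine.
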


    \subsubsection{Simple Permutations}
$(S_{n},\circ)$ denotes the group of permutations of order ${n}$. These permutations and partitions will be used to describe periodic orbits of continuous functions. A partition of a continuous interval in ${n-1}$ intervals will be defined as
\begin{equation*}
P_{n}=\{x_{i},x_{i+1} \in \mathbb{R} : x_{i} < x_{i+1}, \forall i = 1,\dots,n-1 \}
\end{equation*}

    \begin{definition}[Set of permutations]
A permutation $\theta$ belongs to the set of permutations of $f$ (named $Perm(f)$) if and only if there exists a partition $P_{n}$ such that $f(x_{i})=x_{\theta(i)}$. It means,
\begin{equation*}
Perm(f)=\{\theta:f(x_{i})=x_{\theta(i)}, x_{i},x_{\theta(i)} \in P_{n}\}
\end{equation*}
    \end{definition}

In order to make a relationship between permutations and Sharkovskii's Theorem, it is necessary to define a relationship between permutations of different orders.

    \begin{definition}
Let $\theta$ and $\eta$ be permutations. Say $\theta$ dominates $\eta$, denoted by $\theta \lhd \eta$, if $\{f:\theta\in Perm(f)\}$ is contained in $\{f:\eta\in Perm(f)\}$
    \end{definition}

	\begin{definition}[Simple Permutations]
A permutation is considered simple (see \cite{Block1}, \cite{Block2}) if it satisfies one of the following conditions according to its order\medskip

	\begin{enumerate}

	\item
	\emph{(Odd Order)}
These permutations are also known as Stefan Orbits (see \cite{Stefan}). Consider $k=2n-1$. $\theta$ is a simple permutation if $\theta\in C_{2k-1}=\{\alpha_{2n-1},\beta_{2n-1}\}$, where
\begin{eqnarray*}
\alpha_{2n-1} & = & (1,2n-1,n,n-1,n+1,n-2,n+2,\dots,2,2n-2)\\
 & = & \left(\begin{array}{ccccccccc}
1 & 2 & \dots & n-1 & n & n+1 & \dots & 2n-2 & 2n-1\\
2n-1 & 2n-2 & \dots & n+1 & n-1 & n-2 & \dots & 1 & n\end{array}\right)\
\end{eqnarray*}
or
\begin{eqnarray*}
\beta_{2n-1} & = & (1,n,n+1,n-1,n+2,n-2,\dots,2,2n-1)\\
 & = & \left(\begin{array}{ccccccccc}
1 & 2 & \dots & n-1 & n & n+1 & \dots & 2n-2 & 2n-1\\
n & 2n-1 & \dots & n+2 & n+1 & n-1 & \dots & 2 & 1\end{array}\right)\
\end{eqnarray*}\medskip

	\item
	\emph{(Order a Power of Two)}
A permutation of order a power of two is simple (belongs to $C_{2^{r}}$) if it satisfies
\begin{enumerate}
\item
$\theta^{2^{j}}[P(2^{n},2^{j},i)]=P(2^{n},2^{j},i)$
\item
$\theta^{2^{j}}[P(2^{n},2^{k+1},j)]$ has empty intersection with $P(2^{n},2^{k+1},j)$
\end{enumerate}\medskip

	\item\medskip

	\emph{(Mixed Order)}
From this part and on, a permutation of mixed order will be understood as
 a permutation of order $r2^{m}$, where $r$  is an odd integer. The permutations of mixed order are defined by mixing some properties of the previous permutations. A permutation of mixed order $r2^{m}$ is simple (Belongs to Perm(f)) if it satisfies the following two conditions:

\begin{enumerate}
\item $\theta\left[P\left(r2^{m},2^{m},j\right)\right]=P\left(r2^{m},2^{m},\sigma\left(j\right)\right)$, where
$\sigma$  is a simple element of $C_{2^{r}}$;\medskip

\item $\theta^{2^{m}}$ restricted to $P\left(r2^{m},2^{m},j\right)$  is simple for every $j$.
\end{enumerate}

	\end{enumerate}
	\end{definition}

The following examples show some permutations of every case

\begin{example} The permutations
$$\alpha_{5}=\left(\begin{array}{ccccc}
1 & 2 & 3 & 4 & 5\\
5 & 4 & 2 & 1 & 3\end{array}\right),\quad
\beta_{5}=\left(\begin{array}{ccccc}
1 & 2 & 3 & 4 & 5\\
3 & 5 & 4 & 2 & 1\end{array}\right)$$ are the Stefan Orbits of order $5$.
\end{example}

\begin{example}The permutation
$$\theta=\left(\begin{array}{cccc}
1 & 2 & 3 & 4\\
3 & 4 & 2 & 1 \end{array}\right)$$ is a simple permutation of order 4 because $\theta\{1,2\}=\{3,4\}$, $\theta\{3,4\}=\{1,2\}$, $\theta\{1,2,3,4\}=\{1,2,3,4\}$, $\theta^{2}\{1,2\}=\{1,2\}$, $\theta^{2}\{3,4\}=\{3,4\}$, $\theta^{2}\{1\}=\{2\}$,  $\theta^{2}\{2\}=\{1\}$,  $\theta^{2}\{3\}=\{4\}$,  $\theta^{2}\{4\}=\{3\}.$
\end{example}

\begin{example}
Let $\eta=\left(\begin{array}{cccccc}
1 & 2 & 3 & 4 & 5 & 6\\
6 & 5 & 4 & 1 & 3 & 2\end{array}\right)$.

Considering
$\alpha=\left(\begin{array}{ccc}
1 & 2 & 3\\
3 & 1 & 2\end{array}\right)$
and
$\beta=\left(\begin{array}{ccc}
1 & 2 & 3\\
2 & 3 & 1\end{array}\right)$
simple permutations of odd order and
 $\phi=\left(\begin{array}{cc}
1 & 2\\
2 & 1\end{array}\right)$, it is easy to check that:
 \begin{eqnarray*}
\theta\left[P\left(6,2,1\right)\right] & = & \theta\left[\left\{ 1,2,3\right\} \right]=\left\{ 6,5,4\right\} =\left\{ 4,5,6\right\} =P\left(6,2,\phi\left(1\right)\right)=P\left(6,2,2\right)\end{eqnarray*}
\begin{eqnarray*}
\theta\left[P\left(6,2,2\right)\right] & = & \theta\left[\left\{ 4,5,6\right\} \right]=\left\{ 1,3,2\right\} =\left\{ 1,2,3\right\} =P\left(6,2,\phi\left(2\right)\right)=P\left(6,2,1\right)\end{eqnarray*}
and
$\theta^{2}=\left(\begin{array}{cccccc}
1 & 2 & 3 & 4 & 5 & 6\\
2 & 3 & 1 & 6 & 4 & 5\end{array}\right)$. Such as we will see, this permutation can be written by using right Pasting as two disjoint
 cycles of simple permutations,
 $\left(\begin{array}{ccc}
1 & 2 & 3\\
2 & 3 & 1\end{array}\right)$
 and
 $\left(\begin{array}{ccc}
1 & 2 & 3\\
3 & 1 & 2\end{array}\right)$.

\end{example}

	\subsection{Pasting and Reversing}

Pasting and Reversing operations have been defined and applied on integer numbers (\cite{Pegamiento}), rings over conmutative fields (\cite{preprint}), cycles, and permutations (\cite{Genealogia}).  These operations will be used to describe the genealogy of the permutations and a procedure to generate simple permutations with mixed order.

	\begin{definition}[Pasting and Reversing Cycles]
Let be $\theta \in S_{n}$ such that $$\theta = (u) \diamond (v) = (i_{1}, \dots, i_{k})(i_{k+1}, \dots, i_{n}),$$ where $1 \leq k \leq n, i_{j} \in {1, \dots, n}$, $(u)$ and $(v)$ are disjoint cycles. The Pasting of $(u)$ with $(v)$ is a $n$-cycle defined as follows:

\begin{equation*}
(u)\diamond(v)=(u,v)=(i_{1},\dots,i_{k},i_{k+1},\dots,i_{n})
\end{equation*}

The Reversing of $(u)=(i_{1},\dots,i_{k})$ is defined as follows:

\begin{equation*}
\widetilde{(u)}=(i_{k},i_{k-1},\dots,i_{2},i_{i}).
\end{equation*}
	\end{definition}

	\begin{definition}[Pasting and Reversing Permutations]
Let be $\alpha\in S_{m}$, $\beta\in S_{n}$. The Pasting of $\alpha$ and $\beta$ are permutations in $S_{n+m}$ defined as follows: \medskip

\begin{itemize}
\item
Left Pasting of $\alpha$ and $\beta$ ($\alpha\mid\diamond\beta$)
\begin{eqnarray*}
\alpha\mid\diamond\beta & = & \left(\begin{array}{cccccc}
1 & \dots & m & m+1 & \dots & m+n\\
\alpha (1)+n & \dots & \alpha (m)+n & \beta (1) & \dots & \beta (n)\end{array}\right)\
\end{eqnarray*}\medskip

\item
Right Pasting of $\alpha$ and $\beta$ ($\alpha\diamond\mid\beta$)
\begin{eqnarray*}
\alpha\diamond\mid\beta & = & \left(\begin{array}{cccccc}
1 & \dots & m & m+1 & \dots & m+n\\
\alpha (1) & \dots & \alpha (m) & \beta (1)+m & \dots & \beta (n)+m\end{array}\right).\
\end{eqnarray*}
\end{itemize}

Reversing of $\alpha$ ($\widetilde{\alpha}$) is a permutation of $S_{m}$ defined as
\begin{eqnarray*}
\widetilde{\alpha} & = & \left(\begin{array}{ccccc}
1 & 2 & \dots & m-1 & m\\
\alpha (m) & \alpha(m)-1 & \dots & \alpha (2) & \alpha (1)\end{array}\right).\
\end{eqnarray*}
	\end{definition}

The following theorems state the properties (algebraic structure) of these new operations, see also \cite{Genealogia}.

	\begin{theorem}[Associative Property of Pasting]
Let be $\alpha\in S_{m}, \beta\in S_{n}, \gamma\in S_{k}, (u)=(i_{1},\dots,i_{s_{1}}), (v)=(i_{1},\dots,i_{s_{2}}), (w)=(i_{1},\dots,i_{s_{3}})$, then \medskip

\begin{enumerate}
\item
$(\alpha\mid\diamond\beta)\mid\diamond\gamma=\alpha\mid\diamond(\beta\mid\diamond\gamma)$ \medskip

\item
$(\alpha\diamond\mid\beta)\diamond\mid\gamma=\alpha\diamond\mid(\beta\diamond\mid\gamma)$
\medskip

\item
$((u)\diamond(v))\diamond(w)=(u)\diamond((v)\diamond(w))$
\end{enumerate}
	\end{theorem}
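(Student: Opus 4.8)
The plan is to prove each of the three associativity statements by direct computation from the definitions, since in each case both sides are explicitly described permutations (or cycles) on the same index set, so it suffices to check that they agree as functions on every point. I would treat the three parts in the order given, as parts (1) and (2) are essentially mirror images of one another and part (3) is the simplest.

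For part (1), let $\alpha\in S_m$, $\beta\in S_n$, $\gamma\in S_k$, so both $(\alpha\mid\diamond\beta)\mid\diamond\gamma$ and $\alpha\mid\diamond(\beta\mid\diamond\gamma)$ live in $S_{m+n+k}$. I would partition the domain $\{1,\dots,m+n+k\}$ into the three blocks $A=\{1,\dots,m\}$, $B=\{m+1,\dots,m+n\}$, $C=\{m+n+1,\dots,m+n+k\}$ and evaluate both composite permutations on each block. From the definition of left Pasting, an element of $A$ gets shifted up by $n+k$ and passed through $\alpha$; an element of $B$ gets shifted up by $k$ and passed through $\beta$; an element of $C$ is passed through $\gamma$ unshifted. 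One computes the same three formulas for the right-associated side: the inner $\beta\mid\diamond\gamma\in S_{n+k}$ contributes the shift-by-$k$-then-$\beta$ behavior on $B$ and the plain-$\gamma$ behavior on $C$, and then the outer left Pasting with $\alpha$ shifts everything in the $(n+k)$-block up by... wait, more carefully: the outer operation shifts the $\alpha$-part up by $n+k$ and leaves the inner part's formula intact on $B\cup C$. Matching the resulting piecewise formulas block by block gives equality. Part (2) is handled identically with all shifts applied to the right-hand indices and no shift on the $\alpha$-block; I would simply remark that the argument is symmetric, or carry it out in parallel. Part (3) for cycles is immediate from the definition $(u,v)=(i_1,\dots,i_{s_1},i_{s_1+1},\dots)$: pasting cycles is literally concatenation of the sequences of entries, and concatenation of finite sequences is associative, so both sides equal the cycle whose entry-sequence is the concatenation of those of $(u)$, $(v)$, $(w)$.

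The main obstacle — really the only subtlety — is bookkeeping the index shifts correctly, since the shift amount in left Pasting depends on the size of the \emph{second} argument, and when the operation is nested the "second argument" on the right-hand side is itself a Pasting of size $n+k$ rather than $n$. I would be careful to verify that $\alpha\mid\diamond(\beta\mid\diamond\gamma)$ shifts the $A$-block by exactly $n+k$ (the order of $\beta\mid\diamond\gamma$), which matches the cumulative shift $n$ then $k$ — or rather $k$ then $n$ — incurred on the left-hand side $(\alpha\mid\diamond\beta)\mid\diamond\gamma$, where $\alpha$'s block is first shifted by $n$ inside the inner Pasting and then by $k$ in the outer one. Since $n+k=k+n$, these agree, and similarly the $B$-block receives a net shift of $k$ on both sides. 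Once the shift accounting is pinned down, the verification is a routine case check and I would present it compactly as three aligned displays, one per block, rather than in prose.
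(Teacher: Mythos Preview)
Your proposal is correct and matches the paper's own proof almost exactly: the paper also proceeds by writing out both $(\alpha\mid\diamond\beta)\mid\diamond\gamma$ and $\alpha\mid\diamond(\beta\mid\diamond\gamma)$ explicitly in two-line notation, observes the resulting arrays coincide (precisely your block-by-block shift check), declares part (2) analogous, and handles part (3) by literal concatenation of the index sequences. Your caution about the shift in left Pasting depending on the size of the second argument is exactly the point, and your observation that the cumulative shift is $n+k$ on both sides is all that is needed.
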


	\begin{proof}
\begin{enumerate}
\item
Consider $\alpha\in S_{m}, \beta\in S_{n}, \gamma\in S_{k}$
\begin{eqnarray*}
\alpha\mid\diamond\beta & = & \left(\begin{array}{cccccc}
1 & \dots & m & m+1 & \dots & m+n\\
\alpha (1)+n & \dots & \alpha(m)+n & \beta (1) & \dots & \beta (n)\end{array}\right)\\
(\alpha\mid\diamond\beta)\mid\diamond\gamma & = & \Bigg(\begin{array}{cccccc}
1 & \dots & m & m+1 & \dots & m+n\\
\alpha (1)+n+k & \dots & \alpha(m)+n+k & \beta (1)+k & \dots & \beta (n)+k \end{array}\\&&\\
&  & \begin{array}{ccc}
m+n+1 & \dots & m+n+k\\
\gamma(1) & \dots & \gamma (k) \end{array}\Bigg)\\
\end{eqnarray*}
On the other hand
\begin{eqnarray*}
\beta\mid\diamond\gamma & = & \left(\begin{array}{cccccc}
1 & \dots & n & n+1 & \dots & n+k\\
\beta (1)+k & \dots & \beta(n)+k & \gamma (1) & \dots & \gamma (k)\end{array}\right)\\
\alpha\mid\diamond(\beta\mid\diamond\gamma) & = & \Bigg(\begin{array}{cccccc}
1 & \dots & m & m+1 & \dots & m+n\\
\alpha (1)+n+k & \dots & \alpha(m)+n+k & \beta (1)+k & \dots & \beta (n)+k \end{array}\\&&\\
&  & \begin{array}{ccc}
m+n+1 & \dots & m+n+k\\
\gamma(1) & \dots & \gamma (k) \end{array}\Bigg)\\
\end{eqnarray*}
Therefore $(\alpha\mid\diamond\beta)\mid\diamond\gamma=\alpha\mid\diamond(\beta\mid\diamond\gamma)$.
\medskip

\item
The proof of this item is analogous to (1.).
\medskip

\item
Consider $(u)=(i_{1},\dots,i_{s_{1}}), (v)=(i_{1},\dots,i_{s_{2}}), (w)=(i_{1},\dots,i_{s_{3}})$
\begin{eqnarray*}
(u)\diamond(v) & = & (i_{1},\dots,i_{s_{1}},i_{s_{1}+1},\dots,i_{s_{1}+s_{2}})\\
((u)\diamond(v))\diamond(w) & = & (i_{1},\dots,i_{s_{1}},i_{s_{1}+1},\dots,i_{s_{1}+s_{2}},i_{s_{1}+s_{2}+1},\dots,i_{s_{1}+s_{2}+s_{3}})\\
\end{eqnarray*}
On the other hand
\begin{eqnarray*}
(v)\diamond(w) & = & (i_{1},\dots,i_{s_{2}},i_{s_{2}+1},\dots,i_{s_{2}+s_{3}})\\
(u)\diamond((v)\diamond(w)) & = & (i_{1},\dots,i_{s_{1}},i_{s_{1}+1},\dots,i_{s_{1}+s_{2}},i_{s_{1}+s_{2}+1},\dots,i_{s_{1}+s_{2}+s_{3}})\\
\end{eqnarray*}
\end{enumerate}
	\end{proof}

	\begin{theorem}[Double Reversing]
Let be $\alpha\in S_{m}, (u)=(i_{1},\dots,i_{s_{1}})$, then
\begin{enumerate}
\item
$\widetilde{\widetilde{\alpha}}=\alpha$
\item
$\widetilde{\widetilde{(u)}}=(u)$
\end{enumerate}
	\end{theorem}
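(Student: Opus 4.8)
The statement has two parts: $\widetilde{\widetilde{\alpha}}=\alpha$ for a permutation $\alpha\in S_m$, and $\widetilde{\widetilde{(u)}}=(u)$ for a cycle $(u)=(i_1,\dots,i_{s_1})$. Both are pure computations against the definitions, so the plan is simply to unwind each Reversing operation twice and check that we return to the original object. I would treat the two parts separately but in the same spirit.

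For part (1), recall from the definition of Reversing of a permutation that $\widetilde{\alpha}$ is the permutation of $S_m$ sending $i\mapsto \alpha(m+1-i) + (\text{shift})$; more precisely, reading off the definition, $\widetilde{\alpha}(i) = \alpha(m+1-i) - (i-1)$ for $i=1,\dots,m$ — the bottom row is $\alpha(m),\alpha(m)-1,\dots,\alpha(2),\alpha(1)$, i.e.\ the $i$-th entry is $\alpha(m+1-i)-(i-1)$. I would first verify that this is genuinely a permutation (so that applying Reversing again makes sense), then apply the same formula to $\widetilde\alpha$ in place of $\alpha$: the $i$-th entry of $\widetilde{\widetilde\alpha}$ is $\widetilde\alpha(m+1-i) - (i-1) = \bigl[\alpha(m+1-(m+1-i)) - ((m+1-i)-1)\bigr] - (i-1) = \alpha(i) - (m-i) - (i-1) = \alpha(i) - (m-1)$. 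This does \emph{not} immediately give $\alpha(i)$, which tells me the bare arithmetic reading of the definition is incomplete and that the intended convention must renormalize the bottom row so its entries run over $\{1,\dots,m\}$ again (equivalently, reading the bottom row right-to-left and relabeling). So the cleaner route is: interpret Reversing on the two-line notation as the operation that reverses the \emph{order of the columns} and then rewrites them in increasing order of the top row; phrased that way, Reversing is conjugation/composition by the order-reversing involution $\rho(i)=m+1-i$ on one side, and its square is the identity. I would make this precise by writing $\widetilde\alpha = \rho\circ\alpha\circ\rho$ (or the appropriate one-sided version matching the displayed definition), and then $\widetilde{\widetilde\alpha} = \rho\circ(\rho\circ\alpha\circ\rho)\circ\rho = \rho^2\circ\alpha\circ\rho^2 = \alpha$ since $\rho^2 = \mathrm{id}$.

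For part (2), the cycle case is more transparent: by definition $\widetilde{(i_1,\dots,i_{s_1})} = (i_{s_1},i_{s_1-1},\dots,i_2,i_1)$, so applying Reversing once more reverses the list $i_{s_1},i_{s_1-1},\dots,i_1$ back to $i_1,i_2,\dots,i_{s_1}$, giving $(u)$ on the nose. This is essentially the observation that reversing a finite sequence is an involution; the only subtlety is that a cycle is defined only up to cyclic rotation of its entries, but since both Reversings act on a fixed chosen representative list and undo each other entry-by-entry, equality holds for that representative and hence for the cycle.

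The main obstacle is purely notational rather than mathematical: pinning down exactly which convention the displayed definition of $\widetilde\alpha$ intends (in particular whether and how the bottom row is renormalized to stay in $\{1,\dots,m\}$), because the literal arithmetic reading, as the computation above shows, is off by the constant $m-1$ and would fail to be an involution. Once the involutive reformulation $\widetilde{\,\cdot\,} = \rho\circ(\cdot)\circ\rho$ (or the one-sided analogue) is established and shown to agree with the definition, both parts follow in one line from $\rho^2=\mathrm{id}$, with the cycle case being the restriction of the same principle to sequences. No deeper input — not even the associativity theorem just proved — is needed.
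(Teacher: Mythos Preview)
Your cycle argument (part 2) is exactly the paper's. For part (1) you have been tripped up by a typo in the displayed definition: the second entry of the bottom row of $\widetilde\alpha$ should read $\alpha(m-1)$, not $\alpha(m)-1$. With that correction the intended meaning is simply $\widetilde\alpha(i)=\alpha(m+1-i)$, i.e.\ the bottom row of the two-line notation is read right-to-left with no renormalization whatsoever; the paper's proof just writes this down and reverses once more to recover $\alpha$ in three displayed lines.

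Your instinct to recast Reversing via the order-reversing involution $\rho(i)=m+1-i$ is perfectly sound and yields the same one-line argument, but the correct identification is the one-sided $\widetilde\alpha=\alpha\circ\rho$, not the conjugation $\rho\circ\alpha\circ\rho$ (check: $(\alpha\circ\rho)(i)=\alpha(m+1-i)$). Your hedged ``or the appropriate one-sided version'' covers this, and either way $\rho^2=\mathrm{id}$ finishes the proof. The arithmetic detour producing the spurious $-(m-1)$ shift, and the attendant worry about whether $\widetilde\alpha$ is even a permutation, came entirely from taking the typo at face value and interpolating a formula that matches neither reading; drop that paragraph and the rest of your plan is both correct and slightly more conceptual than the paper's bare computation.
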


	\begin{proof}

	\begin{enumerate}
	\item
Consider $\alpha\in S_{m}$
\begin{eqnarray*}
\alpha & =  & \left(\begin{array}{ccccc}
1 & 2 & \dots & m-1 & m\\
\alpha (1) & \alpha (2) & \dots & \alpha (m-1) & \alpha (m)\end{array}\right)\\
\widetilde{\alpha} & = & \left(\begin{array}{ccccc}
1 & 2 & \dots & m-1 & m\\
\alpha (m) & \alpha (m-1) & \dots & \alpha (2) & \alpha (1)\end{array}\right)\\
\widetilde{\widetilde{\alpha}} & =  & \left(\begin{array}{ccccc}
1 & 2 & \dots & m-1 & m\\
\alpha (1) & \alpha (2) & \dots & \alpha (m-1) & \alpha (m)\end{array}\right)\\
\widetilde{\widetilde{\alpha}} & =  & \alpha\
\end{eqnarray*}\medskip

\item
\begin{eqnarray*}
(u) & = & (i_{1},i_{2},\dots,i_{s_{1}})\\
\widetilde{(u)} & = & (i_{s_{1}},\dots,i_{2},i_{1})\\
\widetilde{\widetilde{(u)}} & = & (i_{1},i_{2},\dots,i_{s_{1}})\\
\widetilde{\widetilde{(u)}} & = & (u)\\\end{eqnarray*}

	\end{enumerate}
	\end{proof}

To finish this section we present the following result.

	\begin{theorem}[Reversing of Pasting Property]
Let be $\alpha\in S_{m}, \beta\in S_{n}$, then \medskip

\begin{enumerate}
\item
$\widetilde{\alpha\mid\diamond\beta}=\widetilde{\beta}\diamond\mid\widetilde{\alpha}$ \medskip

\item$\widetilde{\alpha\diamond\mid\beta}=\widetilde{\beta}\mid\diamond\widetilde{\alpha}$
\end{enumerate}
	\end{theorem}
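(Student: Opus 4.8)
The plan is to verify both identities by a direct computation from the two–line forms of Pasting and Reversing. Throughout I treat a permutation $\gamma\in S_{p}$ as the finite sequence $(\gamma(1),\dots,\gamma(p))$ and use that Reversing amounts to the argument substitution $\widetilde{\gamma}(i)=\gamma(p+1-i)$ (this is how $\widetilde{\gamma}$ is written in the proof of the Double Reversing theorem). For each part the strategy is the same: expand the pasted permutation on the left, apply the reversal substitution $i\mapsto m+n+1-i$, then split the index $i$ into the two blocks $1\le i\le n$ and $n+1\le i\le m+n$, and check that on each block the resulting formula coincides with the two–line form of the claimed right–hand side.

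For item (1) I would first record that $\alpha\mid\diamond\beta\in S_{m+n}$ sends $i\mapsto\alpha(i)+n$ for $1\le i\le m$ and $i\mapsto\beta(i-m)$ for $m+1\le i\le m+n$. Applying Reversing replaces the argument $i$ by $m+n+1-i$; when $1\le i\le n$ this argument lies in $\{m+1,\dots,m+n\}$, so the value is $\beta\big((m+n+1-i)-m\big)=\beta(n+1-i)=\widetilde{\beta}(i)$, and when $n+1\le i\le m+n$ the argument lies in $\{1,\dots,m\}$, so the value is $\alpha(m+n+1-i)+n=\alpha\big(m+1-(i-n)\big)+n=\widetilde{\alpha}(i-n)+n$. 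Since $\widetilde{\beta}\in S_{n}$ and $\widetilde{\alpha}\in S_{m}$, these two lines are exactly the two–line description of the Right Pasting $\widetilde{\beta}\diamond\mid\widetilde{\alpha}$, which establishes (1).

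Item (2) is handled symmetrically: $\alpha\diamond\mid\beta$ sends $i\mapsto\alpha(i)$ for $1\le i\le m$ and $i\mapsto\beta(i-m)+m$ for $m+1\le i\le m+n$, so after the substitution $i\mapsto m+n+1-i$ one gets the value $\beta(n+1-i)+m=\widetilde{\beta}(i)+m$ on the block $1\le i\le n$ and the value $\alpha\big(m+1-(i-n)\big)=\widetilde{\alpha}(i-n)$ on the block $n+1\le i\le m+n$; this is precisely the two–line form of the Left Pasting $\widetilde{\beta}\mid\diamond\widetilde{\alpha}$.

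The only subtle point — and essentially the whole content of the statement — is that Reversing interchanges the two blocks: the ``$\alpha$–block'', which occupies the first $m$ positions inside $\alpha\mid\diamond\beta$, becomes the last $m$ positions of $\widetilde{\alpha\mid\diamond\beta}$, while the ``$\beta$–block'' moves to the front. This is what forces the order of $\alpha$ and $\beta$ to be swapped on the right, and it also exchanges left pasting with right pasting, since the shift ($+n$ or $+m$) that a block carries records how many positions sit to its right. Thus the proof reduces to keeping each shift attached to the correct block and double-checking the two endpoints $i=n$ and $i=n+1$; I expect no genuine obstacle beyond this bookkeeping.
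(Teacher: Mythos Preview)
Your proposal is correct and follows essentially the same approach as the paper: a direct verification from the two-line forms of Pasting and Reversing. The only cosmetic difference is that the paper writes out the explicit $2\times(m+n)$ arrays and visually reverses the bottom row before factoring, whereas you encode the same computation via the index substitution $i\mapsto m+n+1-i$ and a case split on the two blocks; the underlying argument is identical.
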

\begin{proof}

Consider $\alpha\in S_{m}, \beta\in S_{n},$ and $m < n$.

\begin{eqnarray*}
\widetilde{\alpha\mid\diamond\beta} & = & \left(\begin{array}{cccccc}
1 & \dots & m & m+1 & \dots & m+n\\
\alpha (1)+n & \dots & \alpha (m)+n & \beta (1) & \dots & \beta (n)\end{array}\right)\\
 & = & \left(\begin{array}{cccccccc}
1 & \dots & n & n+1 & \dots & m & \dots & m+n\\
\beta (n) & \dots & \beta (1) & \alpha (m)+n & \dots & \alpha (1) & \dots & \alpha (1)+n\end{array}\right)\\
 & = & \left(\begin{array}{ccc}
1 & \dots & n\\
\beta (n) & \dots & \beta (1)\end{array}\right)\diamond\mid\left(\begin{array}{ccc}
1 & \dots & m\\
\alpha (m) & \dots & \alpha (1)\end{array}\right)\\
 & = & \widetilde{\left(\begin{array}{ccc}
1 & \dots & n\\
\beta (1) & \dots & \beta (n)\end{array}\right)}\diamond\mid\widetilde{\left(\begin{array}{ccc}
1 & \dots & m\\
\alpha (1) & \dots & \alpha (m)\end{array}\right)}\\
 & = & \widetilde{\beta}\diamond\mid\widetilde{\alpha}\\
\end{eqnarray*}\medskip

\begin{eqnarray*}
\widetilde{\alpha\diamond\mid\beta} & = & \left(\begin{array}{cccccc}
1 & \dots & m & m+1 & \dots & m+n\\
\alpha (1) & \dots & \alpha (m) & \beta (1)+m & \dots & \beta (n)+m\end{array}\right)\\
 & = & \left(\begin{array}{cccccc}
1 & \dots & n & n+1 & \dots & m+n\\
\beta (n)+m & \dots & \beta (1)+m & \alpha (m) & \dots & \alpha (1)\end{array}\right)\\
 & = & \left(\begin{array}{ccc}
1 & \dots & n\\
\beta (n) & \dots & \beta (1)\end{array}\right)\mid\diamond\left(\begin{array}{ccc}
1 & \dots & m\\
\alpha (m) & \dots & \alpha (1)\end{array}\right)\\
 & = & \widetilde{\left(\begin{array}{ccc}
1 & \dots & n\\
\beta (1) & \dots & \beta (n)\end{array}\right)}\mid\diamond\widetilde{\left(\begin{array}{ccc}
1 & \dots & m\\
\alpha (1) & \dots & \alpha (m)\end{array}\right)}\\
 & = & \widetilde{\beta}\mid\diamond\widetilde{\alpha}\\
\end{eqnarray*}\medskip

\end{proof}

	\section{Genealogy of Simple Permutations}

The definition of Block's orbits is also the genealogy for the odd case, given by $\alpha_{n}$ and $\beta_{n}$. There exists two genealogical lines, one per each kind of simple permutation. The other two cases (A power of two, Mixed order) will be developed subsequently.

	\subsection{A power of Two}
The genealogy of simple permutations with order a power of two has been described by Bernhardt \cite{Bernhardt} through transpositions and Acosta-Hum\'anez \cite{Genealogia} through Pasting and Reversing.

		\subsubsection{Transpositions}
Some definitions are required to construct this genealogy.
	\begin{definition}
If $\theta$ belongs to $Sim(2^{n})$ then $\theta^{*}$, an element of $S_{2^{n+1}}$, is defined by
\begin{eqnarray*}
\theta^{*}(2k)=2\theta{k}, & & \theta^{*}(2k-1)=2\theta(k)-1.\\
\end{eqnarray*}
The permutation $\theta^{*}$ consists of two $2^{n}-$cycles, moreover, $\theta^{*}\triangleleft\theta$
	\end{definition}

	\begin{definition}
If $\theta$ belongs to $Sim(2^{n})$ then $\theta_{*}$, an element of $S_{2^{n-1}}$, is defined by
\begin{eqnarray*}
\theta_{*}(k)=\left\lfloor\frac{1}{2}(\theta{(2k)}+1)\right\rfloor\\
\end{eqnarray*}
where $\lfloor x \rfloor$ is the greatest integer less than or equal to $x$.
	\end{definition}	

	\begin{definition}
Let $\rho_{s}$ denote the transposition
$\left(\begin{array}{cc}
2s-1 & 2s\\
2s & 2s-1\end{array}\right).$
	\end{definition}

The following theorem states the existence of predecessors and successors of a given permutation and a way to obtain them.

	\begin{theorem}
Let be $\eta \in$ Sim($2^{n+1}$),$\theta\in$Sim($2^{n}$),$\phi\in$Sim($2^{n-1}$).
If $\eta \triangleleft \theta \triangleleft \phi$, then $\eta=\theta^{*}\circ\rho_{i_{1}}\circ\dots\circ\rho_{i_{2m-1}}$ and $\rho=\theta_{*}$.
	\end{theorem}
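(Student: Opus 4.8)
The statement is Bernhardt's description of the immediate neighbours of $\theta$ in the power-of-two forcing order (\cite{Bernhardt}), and I would organise the proof into a purely algebraic part — how $*$, ${}_{*}$ and the transpositions $\rho_{s}$ interact with the dyadic block structure — and a dynamical part — identifying the simple permutations forced by a given one. First I read conditions (a)--(b) in the definition of $\mathrm{Sim}(2^{r})$ as the assertion that a simple permutation of order $2^{r}$ permutes the blocks $P(2^{r},2^{j},i)$ at every scale and induces a simple permutation on each quotient; in particular the first-scale quotient obtained by collapsing each pair $\{2k-1,2k\}$ to a point is precisely $\theta_{*}\in\mathrm{Sim}(2^{n-1})$, which makes ${}_{*}$ well defined, while $\theta^{*}$ is the unique lift of $\theta$ that carries every block $\{2k-1,2k\}$ onto $\{2\theta(k)-1,2\theta(k)\}$ in the orientation-preserving way, so that $(\theta^{*})_{*}=\theta$ and, as already noted, $\theta^{*}$ splits into the two $2^{n}$-cycles carried by the odd and by the even labels. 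The observation that drives everything is that right-composition with $\rho_{s}$ merely reverses the orientation of $\theta^{*}$ on the single block $\{2s-1,2s\}$; hence $(\theta^{*}\circ\rho_{i_{1}}\circ\cdots\circ\rho_{i_{t}})_{*}=\theta$ for any distinct $i_{1},\dots,i_{t}$ (the floor in the definition of ${}_{*}$ absorbs the swap inside a block), and, following one block once around the $2^{n}$-cycle $\theta$ and counting the reversals it meets, this permutation has two cycles when $t$ is even and is a single $2^{n+1}$-cycle when $t$ is odd.

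For the predecessor I would prove that $\theta\triangleleft\phi$ forces $\phi=\theta_{*}$. That $\theta$ dominates $\theta_{*}$ is the collapsing direction: any $f$ realising $\theta$ on a partition $P_{2^{n}}$ descends — after identifying consecutive pairs and joining the resulting points by line segments — to a continuous map realising $\theta_{*}$, so $\theta_{*}$ lies in $\{\eta:\theta\triangleleft\eta\}$. Uniqueness — that $\theta_{*}$ is the only element of $\mathrm{Sim}(2^{n-1})$ dominated by $\theta$ — is where the Markov-graph material of the Preliminaries is used: for the canonical linear model $f_{\theta}$ one checks that the only simple orbit of order $2^{n-1}$ carried by loops of its $A$-graph is the one reproducing the block action, namely $\theta_{*}$. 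I expect this uniqueness step to be the main obstacle, since it is the only point at which a genuinely dynamical argument, rather than bookkeeping, is required.

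For the successor, the same collapsing-and-uniqueness argument applied one scale higher shows that $\eta\triangleleft\theta$ forces $\eta_{*}=\theta$. Since $\eta\in\mathrm{Sim}(2^{n+1})$ permutes its first-scale blocks with quotient $\eta_{*}$, it sends $\{2k-1,2k\}$ onto $\{2\theta(k)-1,2\theta(k)\}$ either preserving or reversing orientation; writing $R=\{s:\eta\text{ reverses }\{2s-1,2s\}\}$ and using that $\theta^{*}$ is the orientation-preserving lift, we get $\eta=\theta^{*}\circ\prod_{s\in R}\rho_{s}$ (the $\rho_{s}$ commute, so the product is unambiguous). But $\eta$ has order $2^{n+1}$, hence is a single cycle, so by the cycle count of the first paragraph $|R|$ is odd, say $|R|=2m-1$; listing the elements of $R$ as $i_{1},\dots,i_{2m-1}$ gives $\eta=\theta^{*}\circ\rho_{i_{1}}\circ\cdots\circ\rho_{i_{2m-1}}$, and $\phi=\theta_{*}$ was already established. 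The only non-combinatorial ingredient is the uniqueness of the forced simple orbit at each scale, supplied by the loop structure of the Markov graph.
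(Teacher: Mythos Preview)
The paper does not give its own proof of this theorem: it is stated in \S2.1.1 as background, attributed to Bernhardt \cite{Bernhardt}, with no proof supplied. So there is nothing to compare against on the paper's side.

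As to your sketch itself: the structure is the right one and matches Bernhardt's original argument. The algebraic observations --- that $(\theta^{*}\circ\rho_{i_{1}}\circ\cdots\circ\rho_{i_{t}})_{*}=\theta$ regardless of $t$, and that the composite is a single $2^{n+1}$-cycle precisely when $t$ is odd --- are correct and are exactly what one needs for the successor direction. Your reduction of the successor problem to showing $\eta_{*}=\theta$, followed by the block-orientation decomposition, is also correct. You are right to flag the uniqueness step (that the only simple $2^{n-1}$-orbit forced by $\theta$ is $\theta_{*}$) as the substantive dynamical input; in Bernhardt's paper this is handled via the Markov graph of the connect-the-dots map, and a full write-up would need to carry that out rather than merely invoke it. One small point: the collapsing argument you give for $\theta\triangleleft\theta_{*}$ (``descends to a continuous map realising $\theta_{*}$'') is a little loose as stated --- identifying pairs of points does not literally yield a well-defined map on the quotient --- and the clean way is to exhibit a periodic orbit of the canonical map $f_{\theta}$ whose pattern is $\theta_{*}$, which is again a Markov-graph computation.
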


		\subsubsection{Pasting and Reversing}
The following theorem, due to the first author in \cite{Genealogia}, shows a way to construct the same genealogy by using Pasting and Reversing. It is an important result because it is the first use of Pasting and Reversing in combinatorial dynamics.

	\begin{theorem}
Let be $n=2^{k+1}$, $k\in\mathbb{Z}$, $\theta_{1}=\phi_{1}=(1)$, $\theta_{n}$ and $\phi_{n}$ permutations as follows:
\begin{eqnarray*}
\theta_{n} & = & e_{\frac{n}{2}\mid\diamond\theta_{\frac{n}{2}}}\\
\phi_{n} & = & \widetilde{e_{\frac{n}{2}}}\mid\diamond\widetilde{\phi_{\frac{n}{2}}}\\
 & = & \widetilde{\phi_{\frac{n}{2}}\diamond\mid e_{\frac{n}{2}}},\\
\end{eqnarray*}
where
\begin{eqnarray*}
e_{\frac{n}{2}} & = & \left(\begin{array}{cccccc}
1 & 2 & 3 & \dots & \frac{n-2}{2} & \frac{n}{2}\\
1 & 2 & 3 & \dots & \frac{n-2}{2} & \frac{n}{2}\\
\end{array}\right).\\
\end{eqnarray*}

\begin{enumerate}
\item
$\varphi=(\theta_{n}\circ\rho_{k})\circ(\phi_{n}\circ\rho_{j}),\forall\varphi\in Sim(n)$, where $\rho_{k}$ and $\rho_{j}$ are compositions of odd length transpositions,
\item
$\theta_{n}\cong\phi_{n}$, $\theta_{2n}\triangleleft\theta_{n}\triangleleft\theta_{\frac{n}{2}},$ $\phi_{2n}\triangleleft\phi_{n}\triangleleft\phi_{\frac{n}{2}},$
\item
$\theta_{n},\phi_{n}\in Sim(n),$
\item
$(\theta_{2n})_{*}=\theta_{n}=(\theta_{\frac{n}{2}})^{*}\circ\rho_{\frac{n}{2}}$, $(\phi_{2n})_{*}=\phi_{n}=(\phi_{\frac{n}{2}})^{*}\circ\rho_{1}\dots\rho_{\frac{n-2}{2}}.$
\end{enumerate}
	\end{theorem}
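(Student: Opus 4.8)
The plan is an induction on $k$, where $n=2^{k+1}$: each of (1)--(4) at level $n$ is deduced from the corresponding statements at level $n/2$, using the previous theorem (Bernhardt's description of predecessors and successors through $(-)^{*}$, $(-)_{*}$ and the transpositions $\rho_i$), the Double Reversing theorem, and the Reversing of Pasting Property. The base case is $n=2$: starting from $\theta_1=\phi_1=(1)$ one computes $\theta_2=\phi_2=e_1\mid\diamond(1)$, the transposition of $\{1,2\}$, which is the unique simple permutation of order $2$, and (1)--(4) are immediate at this level (the only delicate point, handled by treating $n=2$ separately, is that the $\rho$-product in the $\phi$-formula of (4) degenerates to the identity when $n=2$). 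Fix $n\ge 4$ and assume (1)--(4) for $n/2$.

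The heart of the induction is the explicit identity (4). First I would check, purely by unwinding the definitions of Left Pasting and of $(-)^{*}$, that
\[
\theta_n \;=\; e_{n/2}\mid\diamond\,\theta_{n/2} \;=\; (\theta_{n/2})^{*}\circ\rho_{n/2}.
\]
The point is that $(\theta_{n/2})^{*}$ is the permutation built from two interleaved copies of $\theta_{n/2}$ on the odd, resp.\ even, residues, a product of two cycles; post-composition with the single transposition $\rho_{n/2}=(n-1,n)$ — which exchanges the two ``last'' points of those cycles — merges them into exactly the cycle structure of the block permutation $e_{n/2}\mid\diamond\,\theta_{n/2}$, and this is a finite verification. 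For $\phi_n$ I would use the Reversing of Pasting Property to rewrite $\phi_n=\widetilde{\phi_{n/2}\diamond\mid e_{n/2}}=\widetilde{e_{n/2}}\mid\diamond\widetilde{\phi_{n/2}}$ and then repeat the computation with the order-reversal in place; since reversal does not commute with $(-)^{*}$, the needed correction is now the longer word
\[
\phi_n \;=\; (\phi_{n/2})^{*}\circ\rho_{1}\circ\cdots\circ\rho_{(n-2)/2},
\]
and one records that $(n-2)/2=2^{k}-1$ is odd, so both $\theta_n$ (correction of length $1$) and $\phi_n$ are exhibited as $(-)^{*}$ of a permutation that is simple by the inductive (3), post-composed with an \emph{odd-length} product of the $\rho_i$. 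By the previous theorem these are precisely the predecessors of $\theta_{n/2}$, resp.\ $\phi_{n/2}$, in $Sim(n)$; this yields (3), and, together with $\theta^{*}\triangleleft\theta$ (part of the definition of $(-)^{*}$) applied one level up, the domination chains $\theta_{2n}\triangleleft\theta_n\triangleleft\theta_{n/2}$ and $\phi_{2n}\triangleleft\phi_n\triangleleft\phi_{n/2}$ of (2). (Alternatively (3) follows directly from the block structure: $\theta_n$ and $\phi_n$ each interchange the two halves of $\{1,\dots,n\}$, and $\theta_n^{2}$, resp.\ $\phi_n^{2}$, restricts on each half to $\theta_{n/2}$, resp.\ to $\phi_{n/2}$ or its flip-conjugate, so the defining conditions for simple permutations of order a power of two hold by induction.) The remaining equalities $(\theta_{2n})_{*}=\theta_n$ and $(\phi_{2n})_{*}=\phi_n$ of (4) follow by applying $(-)_{*}$ to the level-$2n$ forms just established and using that $(\psi^{*}\circ\rho_{i_1}\circ\cdots\circ\rho_{i_s})_{*}=\psi$, a one-line floor computation since $(-)_{*}$ only reads the even entries of $\psi^{*}$ and each $\rho_i$ perturbs at most one of them harmlessly. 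Finally $\theta_n\cong\phi_n$ follows from the inductive $\theta_{n/2}\cong\phi_{n/2}$, the two recursions being intertwined by Reversing, an involution by the Double Reversing theorem.

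For the decomposition (1): by the previous theorem each $\varphi\in Sim(n)$ is $\vartheta^{*}$ composed with an odd-length $\rho$-product, where $\vartheta\in Sim(n/2)$ is the genealogical successor of $\varphi$; by the inductive (1) this $\vartheta$ is, in turn, $\theta_{n/2}$ or $\phi_{n/2}$ composed with an odd-length $\rho$-product. Applying $(-)^{*}$, substituting the expressions for $(\theta_{n/2})^{*}$, resp.\ $(\phi_{n/2})^{*}$, taken from (4), and normalizing the resulting transposition word, one rewrites $\varphi$ as $\theta_n$ or $\phi_n$ post-composed with an odd-length product of the $\rho_i$, which is the asserted form.

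The step I expect to be the main obstacle is this last normalization in (1): the operation $(-)^{*}$ interacts with the transpositions $\rho_i$ in a non-obvious way — the index shifts hidden in Pasting mean that a transposition swapping an adjacent pair at level $n/2$ does not become one of the admissible adjacent-pair transpositions $\rho_i$ at level $n$ — so one must carefully rewrite the transposition word produced by the telescoping into the canonical odd-length form, keeping the parity count exact and matching the conventions of \cite{Genealogia}. Once that combinatorial bookkeeping is in place, the rest of the argument reduces, via the previous theorem and the involutivity of Reversing, to finite checks.
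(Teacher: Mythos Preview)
The paper does not contain a proof of this theorem. It is stated in Section~2.1.2 as a result quoted from \cite{Genealogia} (``The following theorem, due to the first author in \cite{Genealogia}, shows a way to construct the same genealogy by using Pasting and Reversing''), and no argument is given here; the only proofs appearing in the paper are those of the Pasting/Reversing properties (Theorems~1.2--1.4) and of the new results in Section~3. Consequently there is no in-paper proof to compare your proposal against.

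That said, your inductive strategy is the natural one and the sample computation you outline for $\theta_n=(\theta_{n/2})^{*}\circ\rho_{n/2}$ is correct (it checks at $n=4$, for instance). Two places deserve extra care if you intend to flesh this out: first, the identity $e_{n/2}\mid\diamond\,\theta_{n/2}=(\theta_{n/2})^{*}\circ\rho_{n/2}$ is \emph{not} merely a matter of ``merging two cycles by swapping their last points''; the map $(-)^{*}$ interleaves odd and even positions while Left Pasting uses a block decomposition, so the verification really does require tracking where each $i$ lands under both descriptions and is specific to the particular form of $\theta_{n/2}$ produced by the recursion. Second, your reduction of (1) relies on pushing an odd-length $\rho$-word at level $n/2$ through $(-)^{*}$ and landing on an odd-length $\rho$-word at level $n$; as you yourself flag, $(-)^{*}$ does not send $\rho_i$ to $\rho_{i'}$, so this step needs an explicit rewriting lemma rather than a parity count. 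Both points are presumably handled in \cite{Genealogia}, which is where the actual proof lives.
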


	\subsection{Mixed Order: Case $4n+2$}

Pasting operation and simple permutations of odd order will be used to construct
 mixed order permutations. Part (ii) of Definition 1 allows to suggest a way to construct simple permutations $\theta$ taking as reference $\theta^{2}$ and its relationship with $\alpha,\beta$.

This paper will introduce that procedure and state a first theorem that describes a first genealogy relationship between simple permutations of order $2r$.  In this particular case, as
 $\phi=\left(\begin{array}{cc}
1 & 2\\
2 & 1\end{array}\right)$ for all of the cases, the condition (ii) of definition 1.6 is equivalent
 to represent $\theta^{2}$ as two simple permutations of order $r$ pasted by right. If it is true that for any permutation $\theta$ there exists only one permutation $\theta^{2}$, it does not happen in the other way, it means, given $\theta^{2}$ it is possible to find $\theta_{1}$, $\theta_{2}$ such that $\theta_{1}^{2}$=$\theta_{2}^{2}$=$\theta^{2}$. So, there exists more than 4 simple permutations with mixed order ${4n+2}$. The cardinal of this set will be stated ahead.
 In order to explain the procedure, the particular cases 6 and 10 will be shown.

	\subsubsection{Order 6}
There are 12 simple permutations of order 6 in 4 subsets, each one of these subsets correspond
 to $\theta^{2}$ by right Pasting of $\alpha_{3}$ and $\beta_{3}$ as follows:

\begin{enumerate}
\item
\begin{eqnarray*}
\theta_{\alpha\alpha6} & = & \left(\begin{array}{cccccc}
1 & 2 & 3 & 4 & 5 & 6\\
6 & 4 & 5 & 1 & 2 & 3\end{array}\right)\\
\theta_{\alpha\alpha5} & = & \left(\begin{array}{cccccc}
1 & 2 & 3 & 4 & 5 & 6\\
5 & 6 & 4 & 2 & 3 & 1\end{array}\right)\\
\theta_{\alpha\alpha4} & = & \left(\begin{array}{cccccc}
1 & 2 & 3 & 4 & 5 & 6\\
4 & 5 & 6 & 3 & 1 & 2\end{array}\right)\end{eqnarray*}
considering
$\alpha=\left(\begin{array}{ccc}
1 & 2 & 3\\
3 & 1 & 2\end{array}\right)$ as simple permutation of odd order ${3}$. For ${i=4,5,6}$
\begin{eqnarray*}
\theta_{\alpha\alpha i}\left[P\left(6,2,1\right)\right] & = & \theta_{\alpha\alpha i}\left[\left\{ 1,2,3\right\} \right]\\
& = & \left\{ 4,5,6\right\}\\
& = & P\left(6,2,\phi\left(1\right)\right)\\
& = & P\left(6,2,2\right)\end{eqnarray*}
\begin{eqnarray*}
\theta_{\alpha\alpha i}\left[P\left(6,2,2\right)\right] & = & \theta_{\alpha\alpha i}\left[\left\{ 4,5,6\right\} \right]\\
& = & \left\{ 1,2,3\right\}\\
& = & P\left(6,2,\phi\left(2\right)\right)\\
& = & P\left(6,2,1\right)\end{eqnarray*}
and
$\theta_{\alpha\alpha}^{2}=\left(\begin{array}{cccccc}
1 & 2 & 3 & 4 & 5 & 6\\
3 & 1 & 2 & 6 & 4 & 5\end{array}\right)$. This permutation can be written by using right Pasting of $\alpha=\left(\begin{array}{ccc}
1 & 2 & 3\\
3 & 1 & 2\end{array}\right)$ and itself, it means, $\theta_{\alpha\alpha}^{2}=\alpha\diamond\mid\alpha$.

\item
\begin{eqnarray*}
\theta_{\beta\beta 4} & = & \left(\begin{array}{cccccc}
1 & 2 & 3 & 4 & 5 & 6\\
4 & 5 & 6 & 2 & 3 & 1\end{array}\right)\\
\theta_{\beta\beta 5} & = & \left(\begin{array}{cccccc}
1 & 2 & 3 & 4 & 5 & 6\\
5 & 6 & 4 & 1 & 2 & 3\end{array}\right)\\
\theta_{\beta\beta 6} & = & \left(\begin{array}{cccccc}
1 & 2 & 3 & 4 & 5 & 6\\
6 & 4 & 5 & 3 & 1 & 2\end{array}\right)
\end{eqnarray*}
considering
$\beta=\left(\begin{array}{ccc}
1 & 2 & 3\\
2 & 3 & 1\end{array}\right)$ as simple permutation of odd order ${3}$. For ${i=4,5,6}$
\begin{eqnarray*}
\theta_{\beta\beta 1}\left[P\left(6,2,1\right)\right] & = & \theta_{\beta\beta 1}\left[\left\{ 1,2,3\right\} \right]\\
& = & \left\{ 4,5,6\right\}\\
& = & P\left(6,2\phi\left(1\right)\right)\\
& = & P\left(6,2,2\right)\end{eqnarray*}

\begin{eqnarray*}
\theta_{\beta\beta 2}\left[P\left(6,2,2\right)\right] & = & \theta_{\beta\beta 2}\left[\left\{ 4,5,6\right\} \right]\\
& = & \left\{ 1,2,3\right\}\\
& = & P\left(6,2,\phi\left(2\right)\right)\\
& = & P\left(6,2,1\right)\end{eqnarray*}
and
$\theta_{\beta\beta}^{2}=\left(\begin{array}{cccccc}
1 & 2 & 3 & 4 & 5 & 6\\
2 & 3 & 1 & 5 & 6 & 4\end{array}\right)$. This permutation can be written by using right Pasting of
$\beta=\left(\begin{array}{ccc}
1 & 2 & 3\\
2 & 3 & 1\end{array}\right)$ and itself, it means,
$\theta_{\beta\beta}^{2}=\beta\diamond\mid\beta$.

\item
\begin{eqnarray*}
\theta_{\alpha\beta 4} & = & \left(\begin{array}{cccccc}
1 & 2 & 3 & 4 & 5 & 6\\
4 & 6 & 5 & 3 & 1 & 2\end{array}\right)\\
\theta_{\alpha\beta 5} & = & \left(\begin{array}{cccccc}
1 & 2 & 3 & 4 & 5 & 6\\
5 & 4 & 6 & 1 & 3 & 2\end{array}\right)\\
\theta_{\alpha\beta 6} & = & \left(\begin{array}{cccccc}
1 & 2 & 3 & 4 & 5 & 6\\
6 & 5 & 4 & 2 & 1 & 3\end{array}\right)\end{eqnarray*}
considering
$\alpha=\left(\begin{array}{ccc}
1 & 2 & 3\\
3 & 1 & 2\end{array}\right)$ and
$\beta=\left(\begin{array}{ccc}
1 & 2 & 3\\
2 & 3 & 1\end{array}\right)$ as simple permutations of odd order ${3}$. For ${i=4,5,6}$

\begin{eqnarray*}
\theta_{\alpha\beta i}\left[P\left(6,2,1\right)\right] & = & \theta_{\alpha\beta i}\left[\left\{ 1,2,3\right\} \right]\\
& = & \left\{ 4,5,6\right\}\\
& = & P\left(6,2,\phi\left(1\right)\right)\\
& = & P\left(6,2,2\right)\end{eqnarray*}

\begin{eqnarray*}
\theta_{\alpha\beta i}\left[P\left(6,2,2\right)\right] & = & \theta_{\alpha\beta i}\left[\left\{ 4,5,6\right\} \right]\\
& = & \left\{ 1,2,3\right\}\\
& = & P\left(6,2,\phi\left(2\right)\right)\\
& = & P\left(6,2,1\right)\end{eqnarray*}

and
$\theta_{\alpha\beta}^{2}=\left(\begin{array}{cccccc}
1 & 2 & 3 & 4 & 5 & 6\\
3 & 1 & 2 & 5 & 6 & 4\end{array}\right)$.  This permutation can be written by using right Pasting of
$\alpha=\left(\begin{array}{ccc}
1 & 2 & 3\\
3 & 1 & 2\end{array}\right)$ and
$\beta=\left(\begin{array}{ccc}
1 & 2 & 3\\
2 & 3 & 1\end{array}\right)$, it means,
$\theta^{2}=\alpha\diamond\mid\beta$.

\item
\begin{eqnarray*}
\theta_{\beta\alpha 4} & = & \left(\begin{array}{cccccc}
1 & 2 & 3 & 4 & 5 & 6\\
4 & 6 & 5 & 2 & 1 & 3\end{array}\right)\\
\theta_{\beta\alpha 5} & = & \left(\begin{array}{cccccc}
1 & 2 & 3 & 4 & 5 & 6\\
5 & 4 & 6 & 3 & 2 & 1\end{array}\right)\\
\theta_{\beta\alpha 6} & = & \left(\begin{array}{cccccc}
1 & 2 & 3 & 4 & 5 & 6\\
6 & 5 & 4 & 1 & 3 & 2\end{array}\right)\end{eqnarray*}
considering
$\alpha=\left(\begin{array}{ccc}
1 & 2 & 3\\
3 & 1 & 2\end{array}\right)$ and
$\beta=\left(\begin{array}{ccc}
1 & 2 & 3\\
2 & 3 & 1\end{array}\right)$ as simple permutations of odd order ${3}$. For ${i=4,5,6}$

\begin{eqnarray*}
\theta_{\beta\alpha i}\left[P\left(6,2,1\right)\right] & = & \theta_{\alpha\beta}\left[\left\{ 1,2,3\right\} \right]\\
& = & \left\{ 4,5,6\right\}\\
& = & P\left(6,2,\phi\left(1\right)\right)\\
& = & P\left(6,2,2\right)\end{eqnarray*}

\begin{eqnarray*}
\theta_{\beta\alpha i}\left[P\left(6,2,2\right)\right] & = & \theta_{\beta\alpha}\left[\left\{ 4,5,6\right\} \right]\\
& = & \left\{ 1,2,3\right\}\\
& = & P\left(6,2,\phi\left(2\right)\right)\\
& = & P\left(6,2,1\right)\end{eqnarray*}

and
$\theta_{\beta\alpha}^{2}=\left(\begin{array}{cccccc}
1 & 2 & 3 & 4 & 5 & 6\\
2 & 3 & 1 & 6 & 4 & 5\end{array}\right)$.  This permutation can be written by using right Pasting of
$\beta=\left(\begin{array}{ccc}
1 & 2 & 3\\
2 & 3 & 1\end{array}\right)$ and
$\alpha=\left(\begin{array}{ccc}
1 & 2 & 3\\
3 & 1 & 2\end{array}\right)$, it means,
$\theta^{2}=\beta\diamond\mid\alpha$.

\end{enumerate}

For all of these simple permutations $\theta$, $\theta$(1) will be 4, 5 or 6. It means, $\theta$(1) takes values between ${2n+2}$ and ${4n+2}$.

\subsubsection{Order 10}
Following the previous procedure, It is possible to construct 20 simple permutations of order 10,  also arranged in 4 subsets, each one of them corresponds to $\theta^{2}$ by right Pasting of $\alpha_{5}$ and $\beta_{5}$ as follows:

\begin{enumerate}

\item
\begin{eqnarray*}
\theta_{\alpha\alpha 6} & = & \left(\begin{array}{cccccccccc}
1 & 2 & 3 & 4 & 5 & 6 & 7 & 8 & 9 & 10\\
6 & 7 & 8 & 9 & 10 & 5 & 4 & 2 & 1 & 3\end{array}\right)\\
\theta_{\alpha\alpha 7} & = & \left(\begin{array}{cccccccccc}
1 & 2 & 3 & 4 & 5 & 6 & 7 & 8 & 9 & 10\\
7 & 10 & 6 & 8 & 9 & 2 & 5 & 1 & 3 & 4\end{array}\right)\\
\theta_{\alpha\alpha 8} & = & \left(\begin{array}{cccccccccc}
1 & 2 & 3 & 4 & 5 & 6 & 7 & 8 & 9 & 10\\
8 & 6 & 9 & 10 & 7 & 4 & 3 & 5 & 2 & 1\end{array}\right)\\
\theta_{\alpha\alpha 9} & = & \left(\begin{array}{cccccccccc}
1 & 2 & 3 & 4 & 5 & 6 & 7 & 8 & 9 & 10\\
9 & 8 & 10 & 7 & 6 & 3 & 1 & 4 & 5 & 2\end{array}\right)\\
\theta_{\alpha\alpha 10} & = & \left(\begin{array}{cccccccccc}
1 & 2 & 3 & 4 & 5 & 6 & 7 & 8 & 9 & 10\\
10 & 9 & 7 & 6 & 8 & 1 & 2 & 3 & 4 & 5\end{array}\right)\end{eqnarray*}
considering
$\alpha=\left(\begin{array}{ccccc}
1 & 2 & 3 & 4 & 5\\
5 & 4 & 2 & 1 & 3\end{array}\right)$ as simple permutation of odd order ${5}$. For ${i=6,7,8,9,10}$
\begin{eqnarray*}
\theta_{\alpha\alpha i}\left[P\left(10,2,1\right)\right] & = & \theta_{\alpha\alpha i}\left[\left\{ 1,2,3,4,5\right\} \right]\\
& = & \left\{ 6,7,8,9,10\right\}\\
& = & P\left(10,2,\phi\left(1\right)\right)\\
& = & P\left(10,2,2\right)\end{eqnarray*}
\begin{eqnarray*}
\theta_{\alpha\alpha i}\left[P\left(10,2,2\right)\right] & = & \theta_{\alpha\alpha i}\left[\left\{ 6,7,8,9,10\right\} \right]\\
& = & \left\{ 1,2,3,4,5\right\}\\
& = & P\left(10,2\phi\left(2\right)\right)\\
& = & P\left(10,2,1\right)\end{eqnarray*}
and
$\theta_{\alpha\alpha}^{2}=\left(\begin{array}{cccccccccc}
1 & 2 & 3 & 4 & 5 & 6 & 7 & 8 & 9 & 10\\
5 & 4 & 2 & 1 & 3 & 10 & 9 & 7 & 6 & 8\end{array}\right)$. This permutation can be written by using right Pasting of $\alpha=\left(\begin{array}{ccccc}
1 & 2 & 3 & 4 & 5\\
5 & 4 & 2 & 1 & 3\end{array}\right)$ and itself, it means, $\theta_{\alpha\alpha}^{2} = \alpha\diamond\mid\alpha$.

\item
\begin{eqnarray*}
\theta_{\beta\beta 6} & = & \left(\begin{array}{cccccccccc}
1 & 2 & 3 & 4 & 5 & 6 & 7 & 8 & 9 & 10\\
6 & 7 & 8 & 9 & 10 & 3 & 5 & 4 & 2 & 1\end{array}\right)\\
\theta_{\beta\beta 7} & = & \left(\begin{array}{cccccccccc}
1 & 2 & 3 & 4 & 5 & 6 & 7 & 8 & 9 & 10\\
7 & 8 & 10 & 6 & 9 & 2 & 3 & 5 & 1 & 4\end{array}\right)\\
\theta_{\beta\beta 8} & = & \left(\begin{array}{cccccccccc}
1 & 2 & 3 & 4 & 5 & 6 & 7 & 8 & 9 & 10\\
8 & 10 & 9 & 7 & 6 & 1 & 2 & 3 & 4 & 5\end{array}\right)\\
\theta_{\beta\beta 9} & = & \left(\begin{array}{cccccccccc}
1 & 2 & 3 & 4 & 5 & 6 & 7 & 8 & 9 & 10\\
9 & 6 & 7 & 10 & 8 & 5 & 4 & 1 & 3 & 2\end{array}\right)\\
\theta_{\beta\beta 10} & = & \left(\begin{array}{cccccccccc}
1 & 2 & 3 & 4 & 5 & 6 & 7 & 8 & 9 & 10\\
10 & 9 & 6 & 8 & 7 & 4 & 1 & 2 & 5 & 3\end{array}\right)\end{eqnarray*}
considering
$\beta=\left(\begin{array}{ccccc}
1 & 2 & 3 & 4 & 5\\
3 & 5 & 4 & 2 & 1\end{array}\right)$ as simple permutation of odd order ${5}$. For ${i=4,5,6}$

\begin{eqnarray*}
\theta_{\beta\beta i}\left[P\left(10,2,1\right)\right] & = & \theta_{\beta\beta i}\left[\left\{ 1,2,3,4,5\right\} \right]\\
& = & \left\{ 6,7,8,9,10\right\}\\
& = & P\left(10,2,\phi\left(1\right)\right)\\
& = & P\left(10,2,2\right)\end{eqnarray*}
\begin{eqnarray*}
\theta_{\beta\beta i}\left[P\left(10,2,2\right)\right] & = & \theta_{\beta\beta}\left[\left\{ 6,7,8,9,10\right\} \right]\\
& = & \left\{ 1,2,3,4,5\right\}\\
& = & P\left(10,2,\phi\left(2\right)\right)\\
& = & P\left(10,2,1\right)\end{eqnarray*}
and
$\theta_{\beta\beta}^{2}=\left(\begin{array}{cccccccccc}
1 & 2 & 3 & 4 & 5 & 6 & 7 & 8 & 9 & 10\\
3 & 5 & 4 & 2 & 1 & 8 & 10 & 9 & 7 & 6\end{array}\right)$. This permutation can be written by using right Pasting of $\beta=\left(\begin{array}{ccccc}
1 & 2 & 3 & 4 & 5\\
3 & 5 & 4 & 2 & 1\end{array}\right)$ and itself, it means, $\theta_{\beta\beta}^{2}=\beta\diamond\mid\beta$.

\item
\begin{eqnarray*}
\theta_{\alpha\beta 6} & = & \left(\begin{array}{cccccccccc}
1 & 2 & 3 & 4 & 5 & 6 & 7 & 8 & 9 & 10\\
6 & 7 & 9 & 10 & 8 & 5 & 4 & 3 & 2 & 1\end{array}\right)\\
\theta_{\alpha\beta 7} & = & \left(\begin{array}{cccccccccc}
1 & 2 & 3 & 4 & 5 & 6 & 7 & 8 & 9 & 10\\
7 & 8 & 6 & 9 & 10 & 2 & 5 & 4 & 1 & 3\end{array}\right)\\
\theta_{\alpha\beta 8} & = & \left(\begin{array}{cccccccccc}
1 & 2 & 3 & 4 & 5 & 6 & 7 & 8 & 9 & 10\\
8 & 10 & 7 & 6 & 9 & 1 & 2 & 5 & 3 & 4\end{array}\right)\\
\theta_{\alpha\beta 9} & = & \left(\begin{array}{cccccccccc}
1 & 2 & 3 & 4 & 5 & 6 & 7 & 8 & 9 & 10\\
9 & 6 & 10 & 8 & 7 & 4 & 3 & 1 & 5 & 2\end{array}\right)\\
\theta_{\alpha\beta 10} & = & \left(\begin{array}{cccccccccc}
1 & 2 & 3 & 4 & 5 & 6 & 7 & 8 & 9 & 10\\
10 & 9 & 8 & 7 & 6 & 3 & 1 & 2 & 4 & 5\end{array}\right)\end{eqnarray*}
considering
$\alpha=\left(\begin{array}{ccccc}
1 & 2 & 3 & 4 & 5\\
5 & 4 & 2 & 1 & 3\end{array}\right)$ and
$\beta=\left(\begin{array}{ccccc}
1 & 2 & 3 & 4 & 5\\
3 & 5 & 4 & 2 & 1\end{array}\right)$ as simple permutations of odd order ${5}$. For ${i=6,7,8,9,10}$

\begin{eqnarray*}
\theta_{\alpha\beta i}\left[P\left(10,2,1\right)\right] & = & \theta_{\alpha\beta i}\left[\left\{ 1,2,3,4,5\right\} \right]\\
& = & \left\{ 6,7,8,9,10\right\}\\
& = & P\left(10,2,\phi\left(1\right)\right)\\
& = & P\left(10,2,2\right)\end{eqnarray*}
\begin{eqnarray*}
\theta_{\alpha\beta i}\left[P\left(10,2,2\right)\right] & = & \theta_{\alpha\beta}\left[\left\{ 6,7,8,9,10\right\} \right]\\
& = & \left\{1,2,3,4,5\right\}\\
& = & P\left(10,2\phi\left(2\right)\right)\\
& = & P\left(10,2,1\right)\end{eqnarray*}
and
$\theta_{\alpha\beta}^{2}=\left(\begin{array}{cccccccccc}
1 & 2 & 3 & 4 & 5 & 6 & 7 & 8 & 9 & 10\\
5 & 4 & 2 & 1 & 3 & 8 & 10 & 9 & 7 & 6\end{array}\right)$. This permutation can be written by using right Pasting of
$$\alpha=\left(\begin{array}{ccccc}
1 & 2 & 3 & 4 & 5\\
5 & 4 & 2 & 1 & 3\end{array}\right),\quad
\beta=\left(\begin{array}{ccccc}
1 & 2 & 3 & 4 & 5\\
3 & 5 & 4 & 2 & 1\end{array}\right),$$ it means, $\theta_{\alpha\beta}^{2}=\alpha\diamond\mid\beta$.

\item
\begin{eqnarray*}
\theta_{\beta\alpha 6} & = & \left(\begin{array}{cccccccccc}
1 & 2 & 3 & 4 & 5 & 6 & 7 & 8 & 9 & 10\\
6 & 7 & 10 & 8 & 9 & 3 & 5 & 2 & 1 & 4\end{array}\right)\\
\theta_{\beta\alpha 7} & = & \left(\begin{array}{cccccccccc}
1 & 2 & 3 & 4 & 5 & 6 & 7 & 8 & 9 & 10\\
7 & 10 & 9 & 6 & 8 & 2 & 3 & 1 & 4 & 5\end{array}\right)\\
\theta_{\beta\alpha 8} & = & \left(\begin{array}{cccccccccc}
1 & 2 & 3 & 4 & 5 & 6 & 7 & 8 & 9 & 10\\
8 & 6 & 7 & 9 & 10 & 5 & 4 & 3 & 2 & 1\end{array}\right)\\
\theta_{\beta\alpha 9} & = & \left(\begin{array}{cccccccccc}
1 & 2 & 3 & 4 & 5 & 6 & 7 & 8 & 9 & 10\\
9 & 8 & 6 & 10 & 7 & 4 & 1 & 5 & 3 & 2\end{array}\right)\\
\theta_{\beta\alpha 10} & = & \left(\begin{array}{cccccccccc}
1 & 2 & 3 & 4 & 5 & 6 & 7 & 8 & 9 & 10\\
10 & 9 & 8 & 7 & 6 & 1 & 2 & 4 & 5 & 3\end{array}\right)\end{eqnarray*}
considering
$\beta=\left(\begin{array}{ccccc}
1 & 2 & 3 & 4 & 5\\
3 & 5 & 4 & 2 & 1\end{array}\right)$ and
$\alpha=\left(\begin{array}{ccccc}
1 & 2 & 3 & 4 & 5\\
5 & 4 & 2 & 1 & 3\end{array}\right)$ simple permutations of odd order. For ${i=6,7,8,9,10}$
\begin{eqnarray*}
\theta_{\beta\alpha i}\left[P\left(10,2,1\right)\right] & = & \theta_{\beta\alpha i}\left[\left\{ 1,2,3,4,5\right\} \right]\\
& = & \left\{ 6,7,8,9,10\right\}\\
& = & P\left(10,2,\phi\left(1\right)\right)\\
& = & P\left(10,2,2\right)\end{eqnarray*}
\begin{eqnarray*}
\theta_{\beta\alpha i}\left[P\left(10,2,2\right)\right] & = & \theta_{\beta\alpha i}\left[\left\{ 6,7,8,9,10\right\} \right]\\
& = & \left\{ 1,2,3,4,5\right\}\\
& = & P\left(10,2\phi\left(2\right)\right)\\
& = & P\left(10,2,1\right)\end{eqnarray*}
and
$\theta_{\beta\alpha}^{2}=\left(\begin{array}{cccccccccc}
1 & 2 & 3 & 4 & 5 & 6 & 7 & 8 & 9 & 10\\
3 & 5 & 4 & 2 & 1 & 10 & 9 & 7 & 6 & 8\end{array}\right)$. This permutation can be written by using right Pasting of
$\beta=\left(\begin{array}{ccccc}
1 & 2 & 3 & 4 & 5\\
3 & 5 & 4 & 2 & 1\end{array}\right)$
and
$\alpha=\left(\begin{array}{ccccc}
1 & 2 & 3 & 4 & 5\\
5 & 4 & 2 & 1 & 3\end{array}\right)$, it means, $\theta_{\beta\alpha}^{2} = \beta\diamond\mid\alpha$.

\end{enumerate}

\section{Main results}

According to definition 1.6 and subsection 2.2, there exists four subsets of simple permutation with mixed order ${4n+2}$. Each one of these subsets corresponds to one of the four possible Pasting of $\alpha_{2n+1}$, $\beta_{2n+1}$.

As $\theta\left[P\left(r2^{m},2^{m},j\right)\right] = P\left(r2^{m},2^{m},\phi\left(j\right)\right)$, where
$\phi=\left(\begin{array}{cc}
1 & 2\\
2 & 1\end{array}\right)$, it implies that the first element of those simple permutations has to be greater than ${2n+1}$. For this reason, there will be simple permutations with mixed order such that $\theta(1)=k$, with ${k=2n+2,\ldots,4n+2}$. It can be stated as a theorem.

\begin{theorem}[Cardinality]
There exists ${8n+4}$ simple permutations with mixed order $4n+2$, such that $\theta^{2}$ can be obtained by one of the following Pasting:

\begin{enumerate}
	\item
$\alpha_{2n+1}\diamond\mid\alpha_{2n+1}$\medskip

	\item
$\beta_{2n+1}\diamond\mid\beta_{2n+1}$
\medskip

	\item
$\alpha_{2n+1}\diamond\mid\beta_{2n+1}$
\medskip

	\item
$\beta_{2n+1}\diamond\mid\alpha_{2n+1}$
\end{enumerate}

\end{theorem}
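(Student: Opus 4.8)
The plan is to count, for each of the four Pasting types listed, how many simple permutations $\theta \in S_{4n+2}$ have $\theta^2$ equal to that fixed Pasting, and then sum. The key structural fact, already set up in subsection~2.2, is that condition (ii) of the definition of simple mixed-order permutation is equivalent—because $\phi = \left(\begin{array}{cc} 1 & 2 \\ 2 & 1 \end{array}\right)$ in every case—to writing $\theta^2$ as a right Pasting $\gamma_1 \diamond\mid \gamma_2$ with $\gamma_1, \gamma_2 \in \{\alpha_{2n+1}, \beta_{2n+1}\}$, each acting on its own block $P(4n+2, 2n+1, j)$. So fix one such $\gamma_1 \diamond\mid \gamma_2$; I must determine all $\theta$ with $\theta^2 = \gamma_1 \diamond\mid \gamma_2$ that also satisfy condition (i), namely $\theta[P(4n+2,2n+1,1)] = P(4n+2,2n+1,2)$ and vice versa.

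First I would record what condition (i) forces about the cycle structure of $\theta$. Since $\theta$ swaps the two blocks $B_1 = \{1,\dots,2n+1\}$ and $B_2 = \{2n+2,\dots,4n+2\}$, and $\theta^2$ restricted to $B_1$ is the $(2n+1)$-cycle $\gamma_1$ while $\theta^2$ on $B_2$ is the $(2n+1)$-cycle $\gamma_2$, the permutation $\theta$ must itself be a single $(4n+2)$-cycle that alternates between $B_1$ and $B_2$. Writing $\theta|_{B_1} = \pi \colon B_1 \to B_2$ and $\theta|_{B_2} = \rho \colon B_2 \to B_1$ (bijections), we have $\theta^2|_{B_1} = \rho \circ \pi = \gamma_1$ and $\theta^2|_{B_2} = \pi \circ \rho = \gamma_2$. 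Thus $\rho = \gamma_1 \circ \pi^{-1}$, so $\theta$ is completely determined by the single bijection $\pi \colon B_1 \to B_2$, subject to the compatibility constraint $\pi \circ \gamma_1 \circ \pi^{-1} = \gamma_2$ (equivalently $\pi \circ \rho = \gamma_2$ follows automatically). Next I would count the $\pi$ satisfying this conjugacy relation: since $\gamma_1$ and $\gamma_2$ are both $(2n+1)$-cycles (hence conjugate in the symmetric group on $2n+1$ letters), the set of conjugating bijections is a coset of the centralizer of $\gamma_1$, which for a single $(2n+1)$-cycle has exactly $2n+1$ elements (the powers of the cycle). That gives $2n+1$ candidate permutations $\theta$ per Pasting type, hence $4(2n+1) = 8n+4$ in total — but then I must check that each candidate $\theta$ is genuinely simple, i.e. also verify condition (i) is consistent (which it is by construction, since $\theta$ maps $B_1$ onto $B_2$) and that $\theta^2$ restricted to each block is literally the \emph{simple} permutation $\gamma_i$ and not merely conjugate to it — this is exactly what the conjugacy constraint $\pi \gamma_1 \pi^{-1} = \gamma_2$ enforces, so all $8n+4$ candidates survive.

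I expect the main obstacle to be the bookkeeping in the last count: one must be sure that distinct choices of $\pi$ in the centralizer coset yield \emph{distinct} permutations $\theta$ (they do, since $\theta$ determines $\pi = \theta|_{B_1}$), and conversely that no simple $\theta$ with the prescribed $\theta^2$ is missed — in particular one should double-check that $\theta$ cannot instead be a product of two $(2n+1)$-cycles within the blocks, which is ruled out precisely because condition (i) says $\theta$ exchanges $B_1$ and $B_2$. The remaining verification, that each resulting $\theta$ lies in $S_{4n+2}$ with order exactly $4n+2$ (not a proper divisor), follows because $\theta^2$ has order $2n+1$ on each block and $\theta$ itself is a single $(4n+2)$-cycle. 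Finally, cross-checking against the worked examples of orders $6$ (where $8n+4 = 12$ with $n=1$) and $10$ (where $8n+4 = 20$ with $n=2$) confirms the count, and the observation that $\theta(1) \in \{2n+2, \dots, 4n+2\}$ — already noted before the statement — is recovered since $\theta(1) = \pi(1) \in B_2$, giving $2n+1$ possible values of $\theta(1)$, consistent with $2n+1$ choices of $\pi$ for each of the four types but distributed so that the union over types covers each admissible value of $\theta(1)$.
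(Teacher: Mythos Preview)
Your proposal is correct and follows the same overall strategy as the paper: four pasting types for $\theta^{2}$, each yielding $2n+1$ simple permutations, for a total of $8n+4$. Where the paper simply asserts that the $2n+1$ admissible values of $\theta(1)\in\{2n+2,\dots,4n+2\}$ parameterize the permutations attached to a given $\theta^{2}$, you supply the underlying justification via the centralizer count for a $(2n+1)$-cycle and then recover the paper's $\theta(1)$-parameterization at the end; the two arguments are equivalent, yours being the more fully worked out.
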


\begin{proof}
A permutation with mixed order is simple if it accomplishes

\begin{enumerate}
\item
$\theta\left[P\left(r2^{m},2^{m},1\right)\right]=P\left(r2^{m},2^{m},2\right)$
\medskip

\item
$\theta\left[P\left(r2^{m},2^{m},2\right)\right]=P\left(r2^{m},2^{m},1\right)$
\medskip

\item
$\theta^{2}$ can be represented as two simple permutations of order $r$ pasted by right.
\end{enumerate}

There exist $4$ simple permutations with order $r$, so there are $4$ possible $\theta^{2}$. On the other hand, the first element of a simple permutations in this order has to be taken from $\lbrace2n+2, 2n+3, \dots, 4n+2\rbrace$ in order to accomplish the conditions (1) and (2). There are $2n+1$ ways to obtain a simple permutation associated to each one of the 4 possible $\theta^{2}$. It means, $8n+4$ simple permutations with order $4n+2$.
\end{proof}

\begin{example}
There exist 12 simple permutations with mixed order 6, as shown in 2.2.1.
\end{example}

 This theorem gives necessary conditions to construct simple permutations based on $\theta^{2}$. However, it is possible to construct some of those permutations by using $\alpha_{2n+1}$ and $\beta_{2n+1}$ without using $\theta^{2}$ thanks to Pasting operation.

\begin{theorem}
The permutations:\medskip

\begin{enumerate}
\item$\theta_{4n+2}=\alpha_{2n+1}\mid\diamond Id_{2n+1}$,\medskip

\item$\theta_{4n+2}=\beta_{2n+1}\mid\diamond Id_{2n+1}$,\medskip

\item$\theta_{4n+2}=Id_{2n+1}\mid\diamond\alpha_{2n+1}$ and\medskip

\item$\theta_{4n+2}=Id_{2n+1}\mid\diamond\beta_{2n+1}$,
\end{enumerate}
are simple permutations with mixed order

\end{theorem}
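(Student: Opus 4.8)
The plan is to verify, for each of the four permutations $\theta$ in the statement, the two defining conditions of a simple permutation of mixed order given in Definition 1.6, case (3), specialized to $r=2n+1$ and $m=1$. With this choice $r2^{m}=4n+2$ and $2^{m}=2$, so if we write $A=\{1,\dots,2n+1\}$ and $B=\{2n+2,\dots,4n+2\}$ we have $P(4n+2,2,1)=A$ and $P(4n+2,2,2)=B$, and the only possible simple permutation $\sigma\in S_{2}$ in condition (i) is the transposition $\phi=\left(\begin{array}{cc}1 & 2\\2 & 1\end{array}\right)$, as already used throughout the paper. Hence I must check (i) $\theta[A]=B$ and $\theta[B]=A$, and (ii) $\theta^{2}$ restricted to $A$ and $\theta^{2}$ restricted to $B$ are both simple --- equivalently (as in the proof of the Cardinality Theorem), $\theta^{2}$ is a right Pasting of two simple permutations of order $2n+1$. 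I would carry out case (1) in full; cases (2)--(4) are the identical computation with a different odd simple permutation placed in a different slot.

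For case (1), set $\theta=\alpha_{2n+1}\mid\diamond Id_{2n+1}$. Applying the definition of left Pasting to $\alpha_{2n+1}\in S_{2n+1}$ and $Id_{2n+1}\in S_{2n+1}$ gives $\theta(i)=\alpha_{2n+1}(i)+(2n+1)$ for $i\in A$ and $\theta(i)=i-(2n+1)$ for $i\in B$; since $\alpha_{2n+1}$ permutes $\{1,\dots,2n+1\}$, the first branch yields $\theta[A]=B$ and the second yields $\theta[B]=A$, so condition (i) holds with $\sigma=\phi$. For condition (ii) I would compose the two branches: for $i\in A$, $\theta(i)=\alpha_{2n+1}(i)+(2n+1)\in B$, hence $\theta^{2}(i)=\alpha_{2n+1}(i)$; for $i\in B$, $\theta(i)=i-(2n+1)\in A$, hence $\theta^{2}(i)=\alpha_{2n+1}\bigl(i-(2n+1)\bigr)+(2n+1)$. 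Comparing with the definition of right Pasting, this is precisely $\theta^{2}=\alpha_{2n+1}\diamond\mid\alpha_{2n+1}$, the right Pasting of two copies of the Stefan orbit $\alpha_{2n+1}$; thus $\theta^{2}$ restricted to $A$ equals $\alpha_{2n+1}$ and restricted to $B$ is the order-isomorphic shift $i\mapsto\alpha_{2n+1}(i-(2n+1))+(2n+1)$, both simple, so (ii) holds and $\theta$ is a simple permutation of mixed order $4n+2$. Finally, since $\alpha_{2n+1}$ is a single $(2n+1)$-cycle, $\theta^{2}$ is a product of two disjoint $(2n+1)$-cycles supported on $A$ and on $B$ and $\theta$ interchanges $A$ and $B$, so $\theta$ is itself a single $(4n+2)$-cycle and its order really is $4n+2$.

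For the remaining cases I would run the same computation. With $\theta=\beta_{2n+1}\mid\diamond Id_{2n+1}$ one gets $\theta^{2}=\beta_{2n+1}\diamond\mid\beta_{2n+1}$; with $\theta=Id_{2n+1}\mid\diamond\alpha_{2n+1}$ one gets $\theta(i)=i+(2n+1)$ on $A$ and $\theta(i)=\alpha_{2n+1}(i-(2n+1))$ on $B$, and again $\theta^{2}=\alpha_{2n+1}\diamond\mid\alpha_{2n+1}$; and with $\theta=Id_{2n+1}\mid\diamond\beta_{2n+1}$ one gets $\theta^{2}=\beta_{2n+1}\diamond\mid\beta_{2n+1}$. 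In every case $\theta$ swaps $A$ with $B$ and $\theta^{2}$ is a right Pasting of two simple permutations of order $2n+1$, so all four permutations are simple of mixed order $4n+2$. This argument has no genuine obstacle; the only points needing care are keeping the additive shift $+(2n+1)$ in the correct component of the Pasting formulas, and reading ``$\theta^{2^{m}}$ restricted to $P(r2^{m},2^{m},j)$ is simple'' modulo the order-preserving relabeling of the block $P(r2^{m},2^{m},j)$ with $\{1,\dots,2n+1\}$ --- exactly as is done in the explicit order $6$ and order $10$ computations earlier in the paper.
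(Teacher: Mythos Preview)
Your proof is correct and follows essentially the same approach as the paper: verify that each $\theta$ interchanges the two blocks $P(4n+2,2,1)$ and $P(4n+2,2,2)$, then compute $\theta^{2}$ explicitly and recognize it as the right Pasting $\alpha_{2n+1}\diamond\mid\alpha_{2n+1}$ (respectively $\beta_{2n+1}\diamond\mid\beta_{2n+1}$). Your presentation is in fact tidier than the paper's --- you keep the additive shift as $2n+1$ throughout rather than overloading the symbol $n$, and your extra remark that $\theta$ is a single $(4n+2)$-cycle is a welcome check that the paper omits.
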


\begin{proof}
\begin{enumerate}

\item $\alpha_{2n+1}\mid\diamond Id_{2n+1}$.
There exists $$\gamma_{\alpha} : \{\alpha_{1}+n, \alpha_{2}+n,\dots,\alpha_{n}+n\} \rightarrow \{1,2,\dots,n\}$$ such that $\gamma_{\alpha}(\alpha_{i}+n)=\alpha_{i}$, where $\gamma_{\alpha}$ is a bijective function.
Let be $\theta_{\alpha \alpha}=\alpha_{2n+1}\mid\diamond Id_{2n+1}$

$\left(\begin{array}{cccccccc}
1 & 2 & \dots & n & n+1 & n+2 & \dots & 2n\\
\mbox{\ensuremath{\alpha_{1}}+n} & \mbox{\ensuremath{\alpha_{2}}+n} & \dots & \mbox{\ensuremath{\alpha_{n}}+n} & \mbox{1} & \mbox{2} & \ldots & \mbox{n}\end{array}\right)$ and check if it accomplishes the definition.

\begin{eqnarray*}
\theta_{\alpha\alpha}\left[P\left(4n+2,2,1\right)\right] & = & \theta_{\alpha\alpha}\left[1, 2, \dots, 2n+1 \right] \\
& = & \{\ensuremath{\alpha_{1}}+n,\ensuremath{\alpha_{2}}+n,\dots,\ensuremath{\alpha_{n}}+n\}\\
& = & \left\{ n+1,n+2,\dots,2n\right\}\\
& = & P\left(10,2,\phi\left(1\right)\right)\\
& = & P\left(10,2,2\right)\end{eqnarray*}

\begin{eqnarray*}
\theta_{\alpha\alpha}\left[P\left(4n+2,2,2\right)\right] & = & \theta_{\alpha\alpha}\left[ \left\{ n+1,n+2,\dots,2n\right\}\right]\\
& = &\{ 1,2,\dots,n\}\\
& = & P\left(10,2,\phi\left(2\right)\right)\\
& = & P\left(10,2,1\right)\end{eqnarray*}

and

\begin{eqnarray*}
\theta_{\alpha\alpha}^{2} &
= & \left(\begin{array}{ccccccc}
1 & 2 & \dots & n & n+1 & \dots & 2n\\
{\gamma_{\alpha} (\alpha_{1}+n)} & {\gamma_{\alpha} (\alpha_{2}+n)} & \dots & {\gamma_{\alpha} (\alpha_{n}+n)} & {\alpha_{1}+n} & \dots & {\alpha_{n}+n}\end{array}\right) \\
& = & \left(\begin{array}{cccccccc}
1 & 2 & \dots & n & n+1 & n+2 & \dots & 2n\\
{\alpha_{1}} & {\alpha_{2}} & \dots & {\alpha_{n}} & {\alpha_{1}+n} & {\alpha_{2}+n} & \dots & {\alpha_{n}+n}\end{array}\right)\\
& = & \left(\begin{array}{cccc}
1 & 2 & \dots & n\\
{\alpha_{1}} & {\alpha_{2}} & \dots & {\alpha_{n}}\end{array}\right)
\diamond\mid
\left(\begin{array}{cccccccc}
1 & 2 & \dots & n\\
{\alpha_{1}} & {\alpha_{2}} & \dots & {\alpha_{n}}\end{array}\right)\\
& = & \alpha_{2n+1}\diamond\mid\alpha_{2n+1}\end{eqnarray*}.\medskip

\item $\beta_{2n+1}\mid\diamond Id_{2n+1}$.
In order to prove the second part, it is necessary to consider $$\gamma_{\beta} : \{\beta_{1}+n, \beta_{2}+n,\dots,\beta_{n}+n\} \rightarrow \{1,2,\dots,n\}$$ such that $\gamma_{\beta}(\beta_{i}+n)=\beta_{i}$, where $\gamma_{\beta}$ that is also a bijective function. The proof is analogous to {(1)}.\medskip

\item$\theta_{4n+2} = Id_{2n+1}\mid\diamond\alpha_{2n+1}$.
Let be $$\theta_{\alpha \alpha} = Id_{2n+1} \mid\diamond\alpha_{2n+1}=\left(\begin{array}{cccccccc}
1 & 2 & \dots & n & n+1 & n+2 & \dots & 2n\\
n+1 & n+2 & \dots & 2n & \alpha_{1} & \alpha_{2} & \dots & \alpha_{1} \end{array}\right)$$ and check if it accomplishes the definition.

\begin{eqnarray*}
\theta_{\alpha\alpha}\left[P\left(4n+2,2,1\right)\right] & = & \theta_{\alpha\alpha}\left[1, 2, \dots, 2n+1 \right] \\
& = & \left\{ n+1,n+2,\dots,2n\right\}\\
& = & P\left(4n+2,2,\phi\left(1\right)\right)\\
& = & P\left(4n+2,2,2\right)\end{eqnarray*}

\begin{eqnarray*}
\theta_{\alpha\alpha}\left[P\left(4n+2,2,2\right)\right] & = & \theta_{\alpha\alpha}\left[ \left\{ n+1,n+2,\dots,2n\right\}\right]\\
& = &\{ \alpha_{1}, \alpha_{2}, \dots, \alpha_{n}\}\\
& = & P\left(4n+2,2,\phi\left(2\right)\right)\\
& = & P\left(4n+2,2,1\right)\end{eqnarray*}

\begin{eqnarray*}
\theta_{\alpha\alpha}\left[P\left(n,2,2\right)\right] & = & \theta_{\alpha\alpha}\left[ \left\{ n+1,n+2,\dots,2n\right\}\right]\\ & =& \left\{ 1,2,\dots,n\right\}\\ & =& P\left(10,2,\phi\left(2\right)\right)\\ &=& P\left(10,2,1\right)\end{eqnarray*}

and
\begin{eqnarray*}
\theta_{\alpha\alpha}^{2}
& = & \left(\begin{array}{cccccccc}
1 & 2 & \dots & n & n+1 & n+2 & \dots & 2n\\
{\alpha_{1}} & {\alpha_{2}} & \dots & {\alpha_{n}} & {{\alpha_{1}+n}} & {{\alpha_{2}+n}} & \dots & {{\alpha_{n}+n}}\end{array}\right)\\
& = & \left(\begin{array}{cccc}
1 & 2 & \dots & n\\
{\alpha_{1}} & {\alpha_{2}} & \dots & {\alpha_{n}}\end{array}\right)\mid\diamond
\left(\begin{array}{cccccccc}
1 & 2 & \dots & n\\
{\alpha_{1}} & {\alpha_{2}} & \dots & {\alpha_{n}}\end{array}\right)\\
& =& \alpha_{2n+1}\diamond\mid\alpha_{2n+1}\end{eqnarray*}.\medskip

\item$\theta_{4n+2}=Id_{2n+1}\mid\diamond\beta_{2n+1}$
The proof is analogous to {(3)}.

\end{enumerate}
\end{proof}

\begin{example}
The permutations $$\alpha_{3}\mid\diamond Id_{3}=\left(\begin{array}{cccccc}
1 & 2 & 3 & 4 & 5 & 6\\
6 & 4 & 5 & 1 & 2 & 3\end{array}\right),$$ $$\beta_{3}\mid\diamond Id_{3}=\left(\begin{array}{cccccc}
1 & 2 & 3 & 4 & 5 & 6\\
5 & 6 & 4 & 1 & 2 & 3\end{array}\right),$$ $$Id_{3}\mid\diamond \alpha_{3}=\left(\begin{array}{cccccc}
1 & 2 & 3 & 4 & 5 & 6\\
4 & 5 & 6 & 3 & 1 & 2\end{array}\right)$$ and $$Id_{3}\mid\diamond \beta_{3}=\left(\begin{array}{cccccc}
1 & 2 & 3 & 4 & 5 & 6\\
4 & 5 & 6 & 2 & 3 & 1\end{array}\right)$$ are simple permutations of order 6.
\end{example}

	\section{Final Remarks and Open Questions}

Taking into account that Pasting and Reversing operations have been used to describe the genealogy of simple permutations with order a power of two in a recursive way and the first particular case of mixed order $(4n+2)$ in a constructive way, the use of those operations as a way to describe periodic orbits can lead to a new perspective. The following problems are related to this paper and can be source of future papers.

The first problem to be developed from this paper, is the extension of the found theorems in order $4n+2$ to the following order $8n+4$ and subsequently to the complete ``middle tail" of Sharkovskii's Order. it is necessary to take into account that, in order $8n+4$ there are several permutations $\phi$ with order a power of two, and it would increase the ways to construct simple permutations with mixed order.

The second open problem to be developed is the relationship between the simple permutations and the dynamic of the associated primitive functions. Markov Graphs shows some facts about this dynamic (for example, the existence of unplugged vertices and its relationship with the existence of some periods, critical points, etc).

The third open problem suggested to the reader is concerning to Pasting and Reversing operations. These intuitive operations have been used successfully in several math contexts (rings, permutations, etc). According to this fact, the study of its properties and its definition in new algebraic structures has to be considered also as a research field.

\medskip

\medskip

\end{document}